\newtheorem{theorem}{Theorem}[section]
\theoremstyle{definition}
\newtheorem{definition}[theorem]{Definition}
\newtheorem{example}[theorem]{Example}
\newtheorem{note}[theorem]{Note}
\theoremstyle{remark}
\newtheorem{remark}[theorem]{Remark}
\numberwithin{equation}{section}
\begin{document}
\setcounter{page}{1}

\title[$\mathcal{I}$ and $\mathcal{I}^{*}$-soft convergence in soft topological spaces]{$\mathcal{I}$ and $\mathcal{I}^{*}$-soft convergence in soft Topological spaces}

\author[P. MALIK AND A. PAUL]{Prasanta Malik$^1$$^{*}$ and Anirban Paul$^2$$^{*}$}

\address{$^{1}$ Department of Mathematics, The University of Burdwan, Golapbag, Burdwan-713104,
West Bengal, India.}
\email{pmjupm@yahoo.co.in}

\address{$^{2}$ Department of Mathematics, The University of Burdwan, Golapbag, Burdwan-713104,
West Bengal, India.}
\email{anirban15997ap@gmail.com}

\subjclass[2020]{40A35, 54A40}

\keywords{Soft set, $\mathcal{I}$-soft convergence, $\mathcal{I}$-soft
 limit point, $\mathcal{I}$-soft cluster point \\
\indent $^{*}$ Corresponding author}
\maketitle

\begin{abstract}
In this paper, we introduce the notions of $\mathcal{I}$ and $\mathcal{I}^{*}$-soft convergence of sequences of soft points in
soft topological spaces and study some basic properties of these notions. Also we introduce the notions of 
$\mathcal{I}$-soft limit points and $\mathcal{I}$-soft cluster points of a sequence of soft points in a soft topological space and study their
interrelationship.       
\end{abstract}

\section{\textbf{Introduction and Background}}
The concept of soft set theory was first introduced by Molodtsov \cite{M1} in the year 1999 and basic algebraic properties of soft sets were studied by Maji et al.\cite{Maji1}. For more primary works in this line one can see (\cite{Acar},\cite{Akt},\cite{Ali} etc.). Because of immense importance the notion of soft topology was introduced by {\c{C}}a{\u{g}}man et al. \cite{Cag1} in 2011. 

On the other hand the notion of statistical convergence of sequences of real numbers was first introduced  by 
H.Fast \cite{Fast1} and also independently by I.J. Schoenberg \cite{Sch1}. This notion of statistical convergence was further extended to ideal convergence by Kostyrko et al. \cite{Kos1} using the concept of ideal of subsets of $\mathbb N$. Because of great importance the notion of statistical convergence for sequences in topological spaces was introduced by Maio et al. \cite{Maio1} and the notion of ideal convergence was developed in topological space by Lahiri et al. \cite{Lah1}.

Recently in soft topological spaces the notion of soft convergence has been introduced by Bayramov et al. \cite{Bay1} and the notion of weighted statistical soft convergence has been introduced by Bayrama et al. \cite{Bay2} and some basic properties of these notions have been studied. In this paper we introduce and study the notion of $\mathcal{I}$-soft convergence of sequences of soft points in soft topological spaces, which extends both the notion of soft convergence and soft statistical convergence. Further we introduce the notion of $\mathcal{I}^{*}$-soft convergence and study it relationship with $\mathcal{I}$-soft convergence. Also we introduce the notions of $\mathcal{I}$-soft limit point and $\mathcal{I}$
-soft cluster point of a sequence of soft points in a soft topological space and study their interconnection. 

\section{\textbf{Basic definitions and notations}} 

In this section we first recall some basic definitions and notations related to statistical and ideal convergence.
%definition 2.1%
\begin{definition} \cite{Fast1} \cite{Sch1}
Let $B\subset \mathbb{N}$ and $B(n)=\{k\in B: k\leq n\}$. Then $B$ is said to have natural density $d(B)$,
if $d(B) = \displaystyle{\lim_{n\rightarrow \infty}}\frac{\left|B(n)\right|}{n}$, where 
$\left|B(n)\right|$ denotes cardinality of the set $B(n)$.
\end{definition}

%definition 2.2%
\begin{definition} \cite{Maio1}
Let $(X,\tau)$ be a topological space. A sequence $\{\eta_{n}\}_{n\in \mathbb N}$ in
$(X,\tau)$ is said to be statistically convergent to a point $x\in X$ if for every neighbourhood $U$ of $x$,
\begin{align*}
d(\{n\in \mathbb N :\eta_{n} \notin U\}) = 0.
\end{align*}									
In this case, we write $st-\displaystyle{\lim_{n\rightarrow \infty}}\eta_{n}=x$.
\end{definition}

%definition 2.3%
\begin{definition} \cite{Kur}
Let $X$ be a non empty set and $\mathcal{I}$ be a collection of subsets of $X$. Then, $\mathcal{I}$ is said 
to be an ideal in $X$ if,
\begin{itemize}
\item[$(i)$] $\emptyset\in \mathcal{I}$,
\item[$(ii)$] $A\in \mathcal{I}~\mbox{and}~ B\subset A \implies B\in \mathcal{I}$,
\item[$(iii)$] $A\in \mathcal{I}~\mbox{and}~B\in \mathcal{I} \implies A\cup B\in \mathcal{I}$.
\end{itemize}
\end{definition}

An ideal $\mathcal{I}$ of $X$ is said to be non-trivial if $\mathcal{I}\neq \{\emptyset\}$ and $X\notin \mathcal{I}$.

A non-trivial ideal $\mathcal{I}$ of $X$ is said to be admissible if $\{x\}\in \mathcal{I}$, for every $x\in X$.

%definition 2.4%
\begin{definition} \cite{Kur}
Let $X$ be a non-empty set and $\mathcal{F}$ be a non empty collection of subsets of $X$. Then, $\mathcal{F}$ is said 
to be a filter on $X$ if,
\begin{itemize}
\item[$(i)$] $\emptyset \notin \mathcal{F}$,
\item[$(ii)$] $A\in \mathcal{F}~\mbox{and}~ B\supset A \implies B\in \mathcal{F}$,
\item[$(iii)$] $A\in \mathcal{F}~\mbox{and}~B\in \mathcal{F} \implies A\cap B\in \mathcal{F}$.
\end{itemize}
\end{definition}

%definition 2.5%
\begin{definition} \cite{Kos1}
Let $\mathcal{I}$ be a non-trivial ideal of a non-empty set $X$. Then the family of sets $\mathcal{F}(\mathcal{I})
=~\{A\subset X : \exists~B\in \mathcal{I}~\mbox{such that}~A=X-B\}$ is a filter on $X$, which is called filter 
associated with the ideal $\mathcal{I}$.
\end{definition}

Through out this paper, we consider that $\mathcal{I}$ as an admissible ideal of $\mathbb N$ unless mentioned otherwise.

%definition 2.6%
\begin{definition} \cite{Lah1}
Let $(X, \tau)$ be a topological space. A sequence $\{\eta_{n}\}_{n\in \mathbb N}$ in $(X, \tau)$ is said to be $\mathcal{I}$-convergent to a point $x\in X$, if for every neighbourhood $U$ of $x$, $\{n\in \mathbb N : \eta_{n}\notin U\}\in \mathcal{I}$. 
\end{definition} 

%definition 2.7%
\begin{definition} \cite{Lah1}
Let $(X, \tau)$ be a topological space. A sequence $\{\eta_{n}\}_{n\in \mathbb N}$ in $(X, \tau)$ is said to $\mathcal{I^{*}}$-convergent 
to a point $x\in X$ if there exists a set $N=\{n_{1}< n_{2}<...<n_{k}<...\}\in \mathcal{F}(\mathcal{I})$ such that 
$\displaystyle{\lim_{k\rightarrow \infty}} \eta_{n_{k}}= x$. 
\end{definition}

%definition 2.8%
\begin{definition} \cite{Lah1}
Let $(X, \tau)$ be a topological space and $\{\eta_{n}\}_{n\in \mathbb N}$ be a sequence in $X$.
\begin{itemize}
\item[$(a)$] $x\in X$ is called an $\mathcal{I}$-limit point of $\{\eta_{n}\}_{n\in \mathbb N}$ if there exists a set $N=\{n_{1}< n_{2}<...\}\subset \mathbb N$ 
such that $N\notin \mathcal I$ and $\displaystyle{\lim_{k\rightarrow \infty}} \eta_{n_{k}}= x$.
\item[$(b)$] $x\in X$ is called an $\mathcal{I}$-cluster point of $\{\eta_{n}\}_{n\in \mathbb N}$ if for every neighbourhood $V$ containing $x$, 
$\{n: \eta_{n}\in V\}\notin \mathcal{I}$.
\end{itemize} 
\end{definition}

Following \cite{Cag1},\cite{Maji1},\cite{M1} we now recall basic concepts of soft set theory.

%definition 2.9%
\begin{definition} \cite{Cag1} \cite{Maji1} \cite{M1}
Let $X$ be an initial universe set, $S$ be a set of parameters and $M\subset S$.
 A soft set $G_{M}$ over $X$ is defined by the set of all ordered pairs $G_{M}=\{(s,g_{M}(s)) : s\in S\}$, where
 $g_{M}:S\rightarrow \mathcal{P}(X)$ is a mapping such that $g_{M}(s)=\emptyset,~\forall~s\in S-M$.
\end{definition}

Throughout this paper we take $X$ as an initial universe set, $S$ as a set of parameters, $M\subset S$, $\mathcal{P}(X)$
as power set of $X$, $G_{M}$ as a soft set over $X$ and $\mathcal{A}(X)$ as a set of all soft sets over $X$ unless otherwise mentioned.

%example 2.10%
\begin{example}
Let the initial universe set $X$ consist of five universities, say, $X=\{x_{1}, x_{2}, x_{3}, x_{4}, x_{5}\}$.
Let $S=\{s_{1}, s_{2}, s_{3}, s_{4}\}$ be a set of parameters, where $s_{1}$ stands for `high quality research', $s_{2}$ stands for 
`excellent faculties', $s_{3}$ stands for `intelligent students' and $s_{4}$ stands for `modern research lab'. Let $M=\{s_{1}, s_{2}, s_{3}\}\subset S$ and $g_{M}:S\rightarrow \mathcal{P}(X)$ be given by

$g_{M}(s_{1})=\{x_{1}\}$, $g_{M}(s_{2})=\{x_{1}, x_{3}, x_{5}\}$, $g_{M}(s_{3})=\{x_{1}, x_{4}, x_{5}\}$ and $g_{M}(s_{4})= \emptyset$.

Note that $g_{M}(s_{i})$ literally represents the set of all universities having the property $s_{i},~i=1,2,3,4$.

Then $G_{M}=\{(s_{1},\{x_{1}\}), (s_{2}, \{x_{1}, x_{3}, x_{5}\}), (s_{3}, \{x_{1}, x_{4}, x_{5}\})\}$
 is a soft set over $X$. 
\end{example}

%definition 2.11%
\begin{definition} \cite{Cag1} \cite{Maji1}
If $G_{M}\in \mathcal{A}(X)$ and $g_{M}(s)= \emptyset,~\forall~s\in S$, then $G_{M}$ is called an empty soft set or null
soft set and is denoted by $G_{\emptyset}~\mbox{or}~\widetilde{\emptyset}$.
\end{definition}

%definition 2.12%
\begin{definition} \cite{Cag1} \cite{Maji1}
If $G_{M}\in \mathcal{A}(X)$ and $g_{M}(s)= X,~\forall~s\in M$, then $g_{M}$ is called a $M$-universal soft set and
is denoted by $G_{\widetilde{M}}$.

If $M=S$, then $G_{\widetilde{S}}$ is called an universal soft set or absolute soft set denoted by $\widetilde{X}$.
\end{definition}

%definition 2.13%
\begin{definition} \cite{Cag1} \cite{Maji1}
Let $G_{M}, H_{N}\in \mathcal{A}(X)$. Then $G_{M}$ is said to be a soft subset of $H_{N}$, if $g_{M}(s)\subset h_{N}(s)$,
$\forall~s\in S$. We write $G_{M}\widetilde{\subset} H_{N}$. In this case, $H_{N}$ is also called a soft super set of $G_{M}$.

Two soft sets $G_{M}, H_{N}\in \mathcal{A}(X)$ are called soft equal if $g_{M}(s)= h_{N}(s),\\~\forall~ s\in S$. We write
$G_{M}= H_{N}$.

Two soft sets $G_{M}, H_{N}\in \mathcal{A}(X)$ are called soft unequal if there exists $s^{'}\in S$ such 
that $g_{M}(s^{'})\neq h_{N}(s^{'})$. We write
$G_{M}\neq H_{N}$.
\end{definition}

%definition 2.14%
\begin{definition} \cite{Cag1} \cite{Maji1}
Let $G_{M}, H_{N}\in \mathcal{A}(X)$. The soft union of $G_{M}$ and $H_{N}$ is denoted by $G_{M}\widetilde{\cup} H_{N}$ and is defined by
$G_{M}\widetilde{\cup} H_{N}= \{(s, g_{M}(s)\cup h_{N}(s)): s\in S\}$; the soft intersection of $G_{M}$ and $H_{N}$ is denoted 
by $G_{M}\widetilde{\cap} H_{N}$ and is defined by $G_{M}\widetilde{\cap} H_{N}= \{(s, g_{M}(s)\cap h_{N}(s)): s\in S\}$; the 
soft difference of $G_{M}$ and $H_{N}$ is denoted by $G_{M}\widetilde{-} H_{N}$ and is defined by $G_{M}\widetilde{-} H_{N}= \{(s, g_{M}(s)- h_{N}(s)): s\in S\}$.

The soft complement of $G_{M}$ is denoted by $G_{M}^{\widetilde{C}}$ and is defined by $G_{M}^{\widetilde{C}}= \{(s, X- g_{M}(s)): s\in S\}$.
\end{definition}

%definition 2.15%
\begin{definition} \cite{Bay1} \cite{Sujoy1}
A soft set $G_{M}$ in $\mathcal{A}(X)$ is called a soft point if $\exists~ s\in M$ and $x\in X$ such that $g_{M}(s)= \{x\}$
 and $g_{M}(s^{'})= \emptyset, ~\forall~ s^{'}\in S-\{s\}$. Such a soft point is denoted by $x_{s}^{M}$.

A soft point $x_{s}^{M^{*}}$ is said to be in the soft set $H_{M}$ if $x\in h_{M}(s)$. We write
 $x^{M^{*}}_{s}\widetilde{\in} H_{M}$. A soft point $x_{s}^{M^{*}}$ is said to be not in the soft set $H_{M}$, if 
$x\notin h_{M}(s)$ and we write $x_{s}^{M^{*}}\widetilde{\notin} H_{M}$.

Two soft points $x^{M^{*}}_{s_{1}}$ and $y^{M^{**}}_{s_{2}}$ in a soft set $H_{M}$ are called distinct if $s_{1}\neq s_{2}$ or $x\neq y$ or both
 hold.
\end{definition}

\begin{remark} %definition 2.16%
Any soft set is soft union of all soft points in it.
\end{remark}

%definition 2.17%
\begin{definition} \cite{Cag1}
Let $G_{M}$ be a soft set over $X$. A collection of soft subsets of $G_{M}$ is called a soft topology on $G_{M}$,
 if the following conditions are satisfied,
\begin{itemize}
\item[$(i)$] $\widetilde{\emptyset}\in \widetilde{\tau}$,
\item[$(ii)$] $G_{M}\in \widetilde{\tau}$,
\item[$(iii)$]  soft union of arbitrary members of $\widetilde{\tau}$ is again a member of $\widetilde{\tau}$,
\item[$(iv)$] soft intersection of finitely members of $\widetilde{\tau}$ is again a member of $\widetilde{\tau}$.
\end{itemize}
The pair $(G_{M},\widetilde{\tau})$ is called a soft topological space. When their is no confusion about the topology
 on $G_{M}$, we denote the soft topological space $(G_{M}, \widetilde{\tau})$ by $G_{M}$ only. Every member of $\widetilde{\tau}$
 is called a soft open set. A soft subset $H_{M^{'}}$ of $G_{M}$ is called soft closed set in the soft topological space $(G_{M},\widetilde{\tau})$,
 if $G_{M}\widetilde{-} H_{M^{'}}$ is soft open in the soft topological space $(G_{M}, \widetilde{\tau})$.
\end{definition}

%definition 2.18$
\begin{definition} \cite{Cag1}
Let $(G_{M}, \widetilde{\tau})$ be a soft topological space and $H_{M^{'}}\widetilde{\subset} G_{M}$. Let ${\widetilde{\tau}}_{H_{M^{'}}}= \{R_{M^{''}}\widetilde{\cap} H_{M^{'}} : R_{M^{''}}\in \widetilde{\tau}\}$. 
Then $(H_{M^{'}}, {\widetilde{\tau}}_{H_{M^{'}}})$ is called a soft subspace of $(G_{M}, \widetilde{\tau})$.
\end{definition}

%definition 2.19%
\begin{definition} \cite{Bay1} \cite{Hs1}
Let $(G_{M},\widetilde{\tau})$ be a soft topological space. A soft subset $K_{M^{'}}$ of a $G_{M}$ is said to be a soft neighbourhood of a soft point $x^{M^{*}}_{s}\widetilde{\in} G_{M}$ if there exists $ H_{M^{''}}\in \widetilde{\tau}$ such that $x^{M^{*}}_{s}~\widetilde{\in} H_{M^{''}}\widetilde{\subset} K_{M^{'}}$.
\end{definition}

%definition 2.20%
\begin{definition} \cite{Cag1}
Let $(G_{M},\widetilde{\tau})$ be a soft topological space and $H_{M^{'}}$ be a soft subset of $G_{M}$. Then
 soft closure of $H_{M^{'}}$ is the soft intersection of all soft closed sets in $G_{M}$, containing the soft
 set $H_{M^{'}}$ and it is denoted by $cl(H_{M^{'}})$.
\end{definition}

%corollary 2.21%
\begin{theorem} \cite{Cag1}
A soft subset $H_{M^{'}}$ of $G_{M}$ is soft closed in $(G_{M},\widetilde{\tau})$ if and
 only if $cl(H_{M^{'}})= H_{M^{'}}$.
\end{theorem}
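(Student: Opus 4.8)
The plan is to prove both implications directly from the definition of $cl(H_{M'})$ as the soft intersection of all soft closed soft subsets of $G_{M}$ containing $H_{M'}$, after first recording one preliminary observation: \emph{an arbitrary soft intersection of soft closed sets in $(G_{M},\widetilde{\tau})$ is again soft closed.} To establish this, let $\{F_{i}\}_{i\in\Lambda}$ be a family of soft closed subsets of $G_{M}$, so that each $G_{M}\widetilde{-}F_{i}$ is soft open by the definition of soft closedness. The only genuine computation is the De Morgan identity relative to $G_{M}$, namely $G_{M}\widetilde{-}\bigl(\widetilde{\bigcap}_{i}F_{i}\bigr)=\widetilde{\bigcup}_{i}\bigl(G_{M}\widetilde{-}F_{i}\bigr)$, which is verified parameterwise from $g_{M}(s)-\bigcap_{i}f_{i}(s)=\bigcup_{i}\bigl(g_{M}(s)-f_{i}(s)\bigr)$ for every $s\in S$, using the definitions of $\widetilde{-}$, $\widetilde{\cap}$ and $\widetilde{\cup}$. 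Since the right-hand side is a soft union of soft open sets, axiom $(iii)$ of a soft topology makes it soft open, whence $\widetilde{\bigcap}_{i}F_{i}$ is soft closed.

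Granting this observation, the $(\Leftarrow)$ direction is immediate: $cl(H_{M'})$ is by definition a soft intersection of soft closed sets, so it is soft closed by the preliminary fact; hence if $cl(H_{M'})=H_{M'}$, then $H_{M'}$ is soft closed.

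For the $(\Rightarrow)$ direction I would first note that $H_{M'}\widetilde{\subset}cl(H_{M'})$ holds unconditionally: every member $F$ of the defining family satisfies $H_{M'}\widetilde{\subset}F$, so $H_{M'}$ is a soft subset of the soft intersection of that family, i.e. of $cl(H_{M'})$ (again checked parameterwise, since $h_{M'}(s)\subset\bigcap_{F}f(s)$ whenever $h_{M'}(s)\subset f(s)$ for all $F$). Now assume $H_{M'}$ is soft closed. Then $H_{M'}$ is itself one of the soft closed sets containing $H_{M'}$, so the soft intersection of the whole family is a soft subset of this particular member, giving $cl(H_{M'})\widetilde{\subset}H_{M'}$. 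Combining the two inclusions yields $cl(H_{M'})=H_{M'}$.

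The argument is essentially routine; the one place to be careful is the De Morgan identity in the preliminary step, since ``soft closed'' is defined here relative to $G_{M}$ via the soft difference $\widetilde{-}$ rather than via the absolute soft complement $\widetilde{C}$. I therefore expect the main (and rather minor) obstacle to be confirming that this relative De Morgan law holds at the level of the parameter maps and that $\widetilde{\bigcap}_{i}F_{i}$ again lands inside $G_{M}$, so that it is a legitimate soft subset to which the definition of soft closedness applies.
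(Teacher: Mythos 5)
Your proof is correct: the paper states this result only as a citation to \cite{Cag1} and gives no proof of its own, and your argument (arbitrary soft intersections of soft closed sets are soft closed via the relative De Morgan law checked parameterwise, plus the two inclusions $H_{M^{'}}\widetilde{\subset}cl(H_{M^{'}})$ and $cl(H_{M^{'}})\widetilde{\subset}H_{M^{'}}$) is the standard one. The only point worth making explicit is that the defining family of soft closed supersets is nonempty --- $G_{M}$ itself is soft closed since $G_{M}\widetilde{-}G_{M}=\widetilde{\emptyset}\in\widetilde{\tau}$ --- so the soft intersection defining $cl(H_{M^{'}})$ and your parameterwise De Morgan identity are both applied to a nonempty family, as required.
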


%definition 2.22%
\begin{definition}\cite{Bay1}
Let $(G_{M}, \widetilde{\tau})$ be a soft topological space. A soft subset $H_{M^{'}}$ of $G_{M}$ is called soft dense
 in $(G_{M}, \widetilde{\tau})$ if $cl(H_{M^{'}})= G_{M}$.
\end{definition}

%definition 2.23%
\begin{definition}\cite{Bay1}
A soft topological space $(G_{M}, \widetilde{\tau})$ is said to be soft separable if it has a countable 
soft subset, soft dense in $(G_{M}, \widetilde{\tau})$.  
\end{definition}

%definition 2.24%
\begin{definition}\cite{Huss11}
Let $(G_{M}, \widetilde{\tau})$ be a soft topological space. If for any two distinct soft points 
$x^{M_{1}}_{s}$ and $y^{M_{2}}_{s^{'}}$ in $G_{M}$, there exist soft open sets $P_{M^{'}}$ and $Q_{M^{''}}$ such that
\begin{align*}
x^{M_{1}}_{s}\widetilde{\in} P_{M^{'}}, y^{M_{2}}_{s^{'}}\widetilde{\in} Q_{M^{''}}, x^{M_{1}}_{s}\widetilde{\notin} Q_{M^{''}}, y^{M_{2}}_{s^{'}}\widetilde{\notin} P_{M^{'}},
\end{align*} 
then $(G_{M},\widetilde{\tau})$ is called soft $T_{1}$ space.
\end{definition}

%definition 2.25%
\begin{definition} \cite{Hs1}
Let $(G_{M},\widetilde{\tau})$ be a soft topological space. If for any two distinct soft points 
$x^{M_{1}}_{s}$ and $y^{M_{2}}_{s^{'}}$ in $G_{M}$, there exist soft open sets $P_{M^{'}}$ and $Q_{M^{''}}$ such that
\begin{align*}
x^{M_{1}}_{s}\widetilde{\in} P_{M^{'}}, y^{M_{2}}_{s^{'}}\widetilde{\in} Q_{M^{''}}, P_{M^{'}}\widetilde{\cap} Q_{M^{''}}=~\widetilde{\emptyset},
\end{align*}
then $(G_{M},\widetilde{\tau})$ is called soft Hausdorff space.
\end{definition}

%definition 2.26%
\begin{definition} \cite{Bay1}
Let $(G_{M}, \widetilde{\tau})$ be a soft topological space. A sub-collection $\beta$ of $\widetilde{\tau}$ is
said to be soft base for $\widetilde{\tau}$, if every element in $\widetilde{\tau}$ can be written as soft 
union of some members of $\beta$.
\end{definition}

%definition 2.27%
\begin{definition} \cite{Bay1}
Let $(G_{M}, \widetilde{\tau})$ be a soft topological space and $x_{s}^{M^{*}}$ be a soft point of $G_{M}$. Let $N_{x^{M^{*}}_{s}}$ be
 the set of all soft neighbourhood of $x^{M^{*}}_{s}$. Then a sub-collection $B_{x^{M^{*}}_{s}}$ of $N_{x_{s}^{M^{*}}}$ is said to be soft 
local base at $x^{M^{*}}_{s}$ if for any $H_{M^{'}}\in N_{x^{M^{*}}_{s}},~\exists~K_{M^{''}}\in B_{x^{M^{*}}_{s}}$ such that
 $x^{M^{*}}_{s}\widetilde{\in} K_{M^{''}}\widetilde{\subset} H_{M^{'}}$.  
\end{definition}

%definition 2.28%
\begin{definition} \cite{Bay1}
A soft topological space $(G_{M},\widetilde{\tau})$ is said to be soft first countable at $x^{M^{0}}_{s}\widetilde{\in} G_{M}$ if 
there is a countable soft local base at $x^{M^{0}}_{s}$. 
\end{definition}

%definition 2.29%
\begin{definition} \cite{Bay1}
A soft topological space $(G_{M},\widetilde{\tau})$ is said to be soft first countable if there is a countable soft local base at
 every soft point of $G_{M}$. 
\end{definition}

%theorem 2.30%
\begin{theorem} \cite{Bay1} \label{Th1}
Let $(G_{M}, \widetilde{\tau})$ be a soft topological space which is soft first countable. Then, for any soft point $x_{s}^{M^{*}}\widetilde{\in}$
 $G_{M}$, there exists a countable collection of soft local base $\{H_{M^{i}}\}_{i\in \mathbb N}$ at $x_{s}^{M^{*}}$ such that $H_{M_{1}}\widetilde{\supset} H_{M_{2}}\widetilde{\supset} \cdots \widetilde{\supset} H_{M_{i}}\widetilde{\supset}
 \cdots$.
\end{theorem}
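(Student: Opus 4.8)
The plan is to start from the given countable soft local base and manufacture the desired nested one by taking finite soft intersections, exactly as in the classical first-countability argument. Since $(G_{M},\widetilde{\tau})$ is soft first countable at $x_{s}^{M^{*}}$, I would fix a countable soft local base $\{K_{M^{i}}\}_{i\in \mathbb N}$ at $x_{s}^{M^{*}}$ and then define, for each $i\in \mathbb N$,
\[
H_{M^{i}} = K_{M^{1}}\widetilde{\cap} K_{M^{2}}\widetilde{\cap}\cdots \widetilde{\cap} K_{M^{i}},
\]
claiming that $\{H_{M^{i}}\}_{i\in \mathbb N}$ is the required collection.

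Next I would verify the three needed properties in turn. First, each $H_{M^{i}}$ is a soft neighbourhood of $x_{s}^{M^{*}}$: for every $j\leq i$ choose $O_{M_{j}}\in \widetilde{\tau}$ with $x_{s}^{M^{*}}\widetilde{\in} O_{M_{j}}\widetilde{\subset} K_{M^{j}}$; by the finite soft-intersection axiom of a soft topology the soft set $O = O_{M_{1}}\widetilde{\cap}\cdots\widetilde{\cap} O_{M_{i}}$ is soft open, and $x_{s}^{M^{*}}\widetilde{\in} O\widetilde{\subset} H_{M^{i}}$, so $H_{M^{i}}\in N_{x_{s}^{M^{*}}}$. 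Second, the chain is decreasing, since $H_{M^{i+1}} = H_{M^{i}}\widetilde{\cap} K_{M^{i+1}}\widetilde{\subset} H_{M^{i}}$, giving $H_{M^{1}}\widetilde{\supset} H_{M^{2}}\widetilde{\supset}\cdots$. Third, it is a soft local base: given any $U\in N_{x_{s}^{M^{*}}}$, the fact that $\{K_{M^{i}}\}_{i\in \mathbb N}$ is a soft local base furnishes an index $j$ with $x_{s}^{M^{*}}\widetilde{\in} K_{M^{j}}\widetilde{\subset} U$, whence $x_{s}^{M^{*}}\widetilde{\in} H_{M^{j}}\widetilde{\subset} K_{M^{j}}\widetilde{\subset} U$.

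The main technical point to be careful about is the first claim: that membership of a soft point and the soft-open interpolation property are both preserved under finite soft intersection. Concretely, $x_{s}^{M^{*}}\widetilde{\in} K_{M^{j}}$ means $x\in k_{M^{j}}(s)$ for every $j$, so $x\in \bigcap_{j\leq i}k_{M^{j}}(s)$, which is precisely the $s$-component of $H_{M^{i}}$; this is what guarantees $x_{s}^{M^{*}}\widetilde{\in} H_{M^{i}}$. Everything else is routine bookkeeping with the soft-set operations, and because only finitely many sets are intersected at each stage, the finite soft-intersection axiom applies directly and no difficulty arises from the countability of the base.
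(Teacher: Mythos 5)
Your proof is correct and complete: the nested finite-intersection construction, together with the component-wise check that $x\in\bigcap_{j\leq i}k_{M^{j}}(s)$ gives $x_{s}^{M^{*}}\widetilde{\in}H_{M^{i}}$ and the soft-open interpolation via the finite soft-intersection axiom, is exactly the standard argument. The paper itself states this theorem without proof, citing \cite{Bay1}, and your argument is precisely the classical first-countability proof transplanted to the soft setting, so there is nothing to add.
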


%\begin{proof}
%Let $x_{s}^{M^{*}}\widetilde{\in} G_{M}$. Then there exists a countable soft local base $\{K_{M^{'}_{i}}\}_{i\in \mathbb N}$ at $x_{s}^{M^{*}}$.
%Let $H_{M_{i}}= K_{M^{'}_{1}}\widetilde{\cap} K_{M^{'}_{2}}\widetilde{\cap} \cdots K_{M^{'}_{i}}$ $, i\in \mathbb N$. Let $R_{M^{''}}$ be a soft open set containing $x_{s}^{M^{*}}$. Then , there exists $K_{M^{'}_{j}}$ in $\{K_{M^{'}_{i}}\}_{i\in \mathbb N}$ such that $x_{s}^{M^{*}}\widetilde{\in} K_{M^{'}_{j}}\widetilde{\subset} R_{M^{''}}$. Then, $x_{s}^{M^{*}}\widetilde{\in} H_{M_{j}}\widetilde{\subset} K_{M^{'}_{j}}\widetilde{\subset} R_{M^{''}}$. Thus, $\{H_{M_{i}}\}_{i\in \mathbb N}$ is a soft local base at $x^{M^{*}}_{s}$ such that $H_{M_{1}}\widetilde{\supset} H_{M_{2}}\widetilde{\supset} \cdots \widetilde{\supset} H_{M_{i}}\widetilde{\supset}
% \cdots$.
%\end{proof}

%definition 2.31%
\begin{definition} \cite{Bay1}
A soft topological space $(G_{M},\widetilde{\tau})$ is said to be soft second countable if $\widetilde{\tau}$ has a countable soft base.
\end{definition}

%theorem 2.32%
\begin{theorem}\cite{Bay1}
Let $(G_{M}, \widetilde{\tau})$ be a soft second countable. Then it is soft separable.
\end{theorem}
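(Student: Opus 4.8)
\emph{Proof proposal.} The plan is to transcribe the classical argument ``second countable $\Rightarrow$ separable'' into the soft setting. First I would fix a countable soft base $\beta=\{B_{M_{i}}\}_{i\in\mathbb{N}}$ for $\widetilde{\tau}$ and discard those members equal to $\widetilde{\emptyset}$, so that I may assume $B_{M_{i}}\neq\widetilde{\emptyset}$ for every $i$. For each $i$, since $B_{M_{i}}$ is a nonempty soft set, by the Remark it is the soft union of all soft points lying in it, and in particular it contains at least one soft point; I would select one such soft point $x_{s_{i}}^{M_{i}^{*}}\widetilde{\in}B_{M_{i}}$ (countably many choices). I then set $D_{M'}=\widetilde{\bigcup}_{i\in\mathbb{N}}x_{s_{i}}^{M_{i}^{*}}$, which, being a soft union of countably many soft points each inside $G_{M}$, is a countable soft subset of $G_{M}$.

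The remaining task is to show that $D_{M'}$ is soft dense, i.e. $cl(D_{M'})=G_{M}$. Since $\widetilde{\emptyset}\in\widetilde{\tau}$, the soft set $G_{M}$ is soft closed and it contains $D_{M'}$, so by the definition of soft closure $cl(D_{M'})\widetilde{\subset}G_{M}$; hence it suffices to prove the reverse inclusion $G_{M}\widetilde{\subset}cl(D_{M'})$. I would argue by contradiction: suppose some soft point $y_{t}^{M^{*}}\widetilde{\in}G_{M}$ satisfies $y_{t}^{M^{*}}\widetilde{\notin}cl(D_{M'})$. Because $cl(D_{M'})$ is soft closed, the soft set $U=G_{M}\widetilde{-}cl(D_{M'})$ is soft open and nonempty, as it contains $y_{t}^{M^{*}}$.

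Next I would expand $U$ through the base: as $U\in\widetilde{\tau}$, it is a soft union of members of $\beta$, and since $y_{t}^{M^{*}}\widetilde{\in}U$, a parameterwise check (reducing soft membership in a soft union to ordinary membership in a union at the relevant parameter) yields an index $j$ with $y_{t}^{M^{*}}\widetilde{\in}B_{M_{j}}\widetilde{\subset}U$. Then $B_{M_{j}}\neq\widetilde{\emptyset}$, so the chosen soft point satisfies $x_{s_{j}}^{M_{j}^{*}}\widetilde{\in}B_{M_{j}}\widetilde{\subset}U=G_{M}\widetilde{-}cl(D_{M'})$, whence $x_{s_{j}}^{M_{j}^{*}}\widetilde{\notin}cl(D_{M'})$. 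On the other hand $x_{s_{j}}^{M_{j}^{*}}\widetilde{\in}D_{M'}\widetilde{\subset}cl(D_{M'})$, a contradiction. Therefore no such $y_{t}^{M^{*}}$ exists, giving $G_{M}\widetilde{\subset}cl(D_{M'})$ and hence $cl(D_{M'})=G_{M}$, so $(G_{M},\widetilde{\tau})$ is soft separable.

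I expect the main difficulty to be bookkeeping rather than conceptual. The delicate points are: verifying that membership of a soft point in a soft union collapses, at its single active parameter, to ordinary set membership (so that one basic soft set $B_{M_{j}}$ captures $y_{t}^{M^{*}}$); confirming that ``countable soft subset'' is correctly read as the soft union of countably many soft points, so that $D_{M'}$ genuinely qualifies; and noting that the selection of the $x_{s_{i}}^{M_{i}^{*}}$ uses only countable choice and is harmless. Everything else follows from the definitions of soft base, soft closure, and soft denseness recalled above.
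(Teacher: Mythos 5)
Your proof is correct and follows the standard translation of the classical ``second countable $\Rightarrow$ separable'' argument into the soft setting; note that the paper itself states this theorem without proof, merely recalling it from \cite{Bay1}, so there is no internal proof for you to diverge from, and your route is exactly the expected one. The only steps you leave implicit --- that an arbitrary soft intersection of soft closed sets is soft closed (so that $cl(D_{M'})$ is soft closed and $G_{M}\widetilde{-}\,cl(D_{M'})$ is soft open), and that a soft point belongs to a soft union iff, at its single active parameter, it belongs to some constituent --- are precisely the parameterwise bookkeeping facts you flag, and both hold by checking the defining functions $s\mapsto g_{M}(s)$ coordinatewise, so no gap remains.
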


%definition 2.33%
\begin{definition} \cite{Bay1} \cite{Hs1}
Let $(G_{M}, \widetilde{\tau})$ be a soft topological space . A sequence $\{\eta_{n}\}_{n\in \mathbb N}$ 
of soft points in $G_{M}$ is said to be soft convergent to a soft point $x^{M^{*}}_{s} \widetilde{\in} G_{M}$, if for any soft neighbourhood $H_{M^{'}}$ of
$x^{M^{*}}_{s} $, there exists $k\in \mathbb N$ such that $\eta_{n}\widetilde{\in} H_{M^{'}}, \forall n\geq k$. In this case, we write
 $\eta_{n}\underset{}{\rightarrow} x_{s}^{M^{*}}$ or $\displaystyle{\lim_{n\rightarrow \infty}}{\eta_{n}}=
 x_{s}^{M^{*}}$ and $x_{s}^{M^{*}}$ is called soft limit of $\{\eta_{n}\}_{n\in \mathbb N}$.
\end{definition}

%definition 2.34%
\begin{definition}\cite{Bay2}
Let $(G_{M}, \widetilde{\tau})$ be a soft topological space . A sequence $\{\eta_{n}\}_{n\in \mathbb N}$ 
of soft points in $G_{M}$ is said to be statistically soft convergent to a soft point $x^{M^{*}}_{s}\widetilde{\in} G_{M}$, if for any soft neighbourhood
 $H_{M^{'}}$ of $x^{M^{*}}_{s}$, $d(\{n\in \mathbb N: \eta_{n}\widetilde{\notin} H_{M^{'}}\})=0$. In this case, we write
 st-$\displaystyle{\lim_{n\rightarrow \infty}}{\eta_{n}}= x_{s}^{M^{*}}$ or $\eta_{n} \underset{}{\stackrel{st}{\rightarrow}}
 x_{s}^{M^{*}}$. $x_{s}^{M^{*}}$ is called a statistical soft limit of $\{\eta_{n}\}_{n\in \mathbb N}$. 
\end{definition}

%definition 2.35%
\begin{definition} \cite{Ar1}
Let $(G_{M},\widetilde{\tau})$ be a soft topological space. A soft point $x^{M^{*}}_{s}$ in $G_{M}$ is said to
 be a soft limit point of a soft subset $K_{M^{'}}$ of $G_{M}$, if for each soft
 neighbourhood $H_{M^{''}}$ of $x^{M^{*}}_{s}$, $(H_{M^{''}}\widetilde{-} \{x^{M^{*}}_{s}\})\widetilde{\cap} K_{M^{'}}~\neq~\widetilde{\emptyset}$.
\end{definition}

%definition 2.36%
\begin{definition} \cite{Ar1}
Let $(G_{M},\widetilde{\tau})$ be a soft topological space. A soft point $x^{M^{*}}_{s}$ in $G_{M}$ is said to
 be a soft limit point of a sequence $w=\{\eta_{n}\}_{n\in \mathbb N}$ of soft points in $G_{M}$, if for each soft
 neighbourhood $H_{M^{'}}$ of $x^{M^{*}}_{s}$, the set $\{n\in \mathbb N: \eta_{n}\widetilde{\in} H_{M^{'}}\}$ is
 infinite.

The soft set of all soft limit points of $w=\{\eta_{n}\}_{n\in \mathbb N}$ is denoted by $L_{w}$.
\end{definition}

%%%%%%%%%%%%%%%%%%%%%%%%%%%%%%%%%%%%%%%%%%%%%%%%%%%%%%%%%%%%%%%%%%%%%%%%%%%%%%%%%%%%%%%%%%%%%%%%%%%%%%%%%%%%%%%%%%%%%%%%%%%%%%%%%%%%%%%%%%%%%%%%%%%%%%%%%

\section{\textbf{$\mathcal{I}$-soft convergence in soft topological spaces}}

In this section following Kostyrko et al. \cite{Kos1} and Lahiri et al. \cite{Lah1}, we introduce the notion of $\mathcal{I}$-soft convergence of sequences of soft points in a soft topological space. 

%definition 3.1%
\begin{definition}
Let $(G_{M},\widetilde{\tau})$ be a soft topological space. A sequence $\{\eta_{n}\}_{n\in \mathbb{N}}$ of soft points in $G_{M}$ is 
said to be $\mathcal{I}$-soft convergent to a soft point $x^{M^{*}}_{s}$ in $G_{M}$, if for any soft neighbourhood $U_{M^{'}}$ of $x^{M^{*}}_{s}$, $\{n\in \mathbb{N}:
 \eta_{n}\widetilde{\notin} U_{M^{'}}\}\in \mathcal{I}$. 
In this case, $x^{M^{*}}_{s}$ is called $\mathcal{I}$-soft limit of $\{\eta_{n}\}_{n\in \mathbb{N}}$. In this case, we write $\mathcal{I}-\displaystyle{\lim_{n\rightarrow \infty}}{\eta_{n}}= x^{M^{*}}_{s}$.
\end{definition}

%remark 3.2%
\begin{remark}
If we take $\mathcal{I}= \mathcal{I}_{f}=\{A\subset \mathbb N : A~\mbox{is a finite subset of}~\mathbb N\}$, then $\mathcal{I}_{f}$-soft convergent coincides with usual soft convergence \cite{Bay1} \cite{Hs1}. If we take $\mathcal{I}= \mathcal{I}_{d}=\{A\subset \mathbb N : d(A)=0 \}$, then $\mathcal{I}_{d}$-soft convergence coincides with statistical soft convergence \cite{Bay2}.
\end{remark}

%note 3.3%
\begin{note}
It is clear that for an admissible ideal $\mathcal{I}$ any soft convergent sequence of soft points in a soft topological space is $\mathcal{I}$-soft convergent with same limit. But the converse is not true. 
\end{note}

%example 3.4%

%note 3.5%
\begin{note}
Note that $\mathcal{I}$-soft limit of a sequence of soft points in a soft topological space may not be unique. 
\end{note}

If the soft topological space is soft Hausdorff then $\mathcal{I}$-soft limit of an $\mathcal{I}$-soft convergent sequence is unique, as proved in the following theorem.

%theorem 3.7%
\begin{theorem}\label{Thh2}
$\mathcal{I}$-soft limit of an $\mathcal{I}$-soft convergent sequence in a soft Hausdorff space is unique.
\end{theorem}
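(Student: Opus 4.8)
The plan is to argue by contradiction, combining the separation guaranteed by the soft Hausdorff property with the finite-union axiom of the ideal $\mathcal{I}$ and its non-triviality. Suppose, contrary to the assertion, that one and the same sequence $\{\eta_{n}\}_{n\in \mathbb N}$ of soft points satisfies $\mathcal{I}-\lim_{n\rightarrow \infty}\eta_{n}= x^{M^{*}}_{s}$ and $\mathcal{I}-\lim_{n\rightarrow \infty}\eta_{n}= y^{M^{**}}_{s^{'}}$ with $x^{M^{*}}_{s}\neq y^{M^{**}}_{s^{'}}$. The goal is to derive that $\mathbb N\in \mathcal{I}$, which is forbidden for an admissible (hence non-trivial) ideal.

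Since $(G_{M},\widetilde{\tau})$ is soft Hausdorff, I would first invoke the defining property on the two distinct soft points to obtain soft open sets $P_{M^{'}}$ and $Q_{M^{''}}$ with $x^{M^{*}}_{s}\widetilde{\in} P_{M^{'}}$, $y^{M^{**}}_{s^{'}}\widetilde{\in} Q_{M^{''}}$ and $P_{M^{'}}\widetilde{\cap} Q_{M^{''}}=\widetilde{\emptyset}$. Each of these soft open sets is in particular a soft neighbourhood of the soft point it contains, so the definition of $\mathcal{I}$-soft convergence applies directly: the index sets $A=\{n\in \mathbb N:\eta_{n}\widetilde{\notin} P_{M^{'}}\}$ and $B=\{n\in \mathbb N:\eta_{n}\widetilde{\notin} Q_{M^{''}}\}$ both belong to $\mathcal{I}$.

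The crux is to convert the disjointness of $P_{M^{'}}$ and $Q_{M^{''}}$ into a covering statement about the indices. I would observe that no soft point can lie in both $P_{M^{'}}$ and $Q_{M^{''}}$, for a soft point lying in both would, by the definition of soft intersection, lie in $P_{M^{'}}\widetilde{\cap} Q_{M^{''}}=\widetilde{\emptyset}$, which is impossible. Hence for each $n$ at least one of $\eta_{n}\widetilde{\notin} P_{M^{'}}$ or $\eta_{n}\widetilde{\notin} Q_{M^{''}}$ holds, giving $\mathbb N=A\cup B$. Because $\mathcal{I}$ is an ideal it is closed under finite unions, so $\mathbb N=A\cup B\in \mathcal{I}$, contradicting the non-triviality of the admissible ideal $\mathcal{I}$. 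This contradiction forces $x^{M^{*}}_{s}= y^{M^{**}}_{s^{'}}$, establishing uniqueness.

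The argument presents no genuine difficulty; the only point needing a moment's care is the passage from disjoint soft open neighbourhoods to the covering $\mathbb N=A\cup B$, where one must recall that a single $\eta_{n}$ belonging simultaneously to $P_{M^{'}}$ and $Q_{M^{''}}$ would place it in their soft intersection, which is the null soft set and therefore contains no soft point. Everything else is a routine application of the ideal axioms.
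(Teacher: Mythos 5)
Your proof is correct and is essentially the paper's argument in dual form: the paper passes to the associated filter $\mathcal{F}(\mathcal{I})$, concludes that $\{n:\eta_{n}\widetilde{\in} U_{M^{'}}\}\cap\{n:\eta_{n}\widetilde{\in} V_{M^{''}}\}$ lies in $\mathcal{F}(\mathcal{I})$ (hence is nonempty), and contradicts the disjointness of the two soft neighbourhoods, while you stay inside the ideal and derive $\mathbb N=A\cup B\in\mathcal{I}$, contradicting non-triviality. These are contrapositive phrasings of the same step, so the two proofs coincide in substance, including your (correctly justified) observation that a soft point lying in both soft open sets would lie in their soft intersection $\widetilde{\emptyset}$.
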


\begin{proof}
Let $(G_{M}, \widetilde{\tau})$ be a soft Hausdorff space and $\{\eta_{n}\}_{n\in \mathbb N}$ be a sequence of soft points in $G_{M}$, which is $\mathcal{I}$-soft convergent. If possible let  $\mathcal{I}-\displaystyle{\lim_{n\rightarrow \infty}}= x_{s}^{M^{*}}$ as well as $\mathcal{I}-\displaystyle{\lim_{n\rightarrow \infty}}= y_{s^{'}}^{M^{**}}$ in $G_{M}$. Then, there exists soft open sets $U_{M^{'}}$ and $V_{M^{''}}$ such that $x_{s}^{M^{*}}\widetilde{\in} U_{M^{'}},
 y_{s^{'}}^{M^{**}}\widetilde{\in} V_{M^{''}}$ and $U_{M^{'}}\widetilde{\cap} V_{M^{''}}= \widetilde{\emptyset}$. Then, $\{n\in \mathbb N: \eta_{n}\widetilde{\notin} U_{M^{'}}\}
\in \mathcal{I}$ and $\{n\in \mathbb N: \eta_{n}\widetilde{\notin} V_{M^{''}}\}\in \mathcal{I}$. So, $\{n\in \mathbb N: \eta_{n}\widetilde{\in} U_{M^{'}}\}\in \mathcal{F}(\mathcal{I})$ and $\{n\in \mathbb N: \eta_{n}\widetilde{\in} V_{M^{''}}\}\in \mathcal{F}(\mathcal{I})$. Then $\{n\in \mathbb N: \eta_{n}\widetilde{\in} U_{M^{'}}\}\cap \{n\in \mathbb N: \eta_{n}\widetilde{\in} V_{M^{''}}\} \in \mathcal{F}(\mathcal{I})$. This implies, $U_{M^{'}}\widetilde{\cap} V_{M^{''}}
 \neq \widetilde{\emptyset}$, which is a contradiction. Thus, $\{\eta_{n}\}_{n\in \mathbb N}$ has unique $\mathcal{I}$-soft limit.
\end{proof}

%note 3.8%
\begin{note}
Every subsequence of an $\mathcal{I}$-soft convergent sequence need not to be $\mathcal{I} $-soft convergent to the same $\mathcal{I}$-soft limit. 
\end{note}

%theorem 3.10%
\begin{theorem}
Let $(G_{M}, \widetilde{\tau})$ be a soft topological space and $\{\eta_{n}\}_{n\in \mathbb N}$ be a sequence of
soft points in $G_{M}$. If every subsequence of $\{\eta_{n}\}_{n\in \mathbb N}$ has a subsequence which is 
$\mathcal{I}$-soft convergent to a soft point $x_{s}^{M^{*}}\widetilde{\in} G_{M}$, then $\{\eta_{n}\}_{n\in \mathbb N}$
is $\mathcal{I}$-soft convergent to  $x_{s}^{M^{*}}$.
\end{theorem}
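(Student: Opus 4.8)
The plan is to argue by contradiction. Suppose $\{\eta_{n}\}_{n\in \mathbb N}$ is not $\mathcal{I}$-soft convergent to $x_{s}^{M^{*}}$. Then there is a soft neighbourhood $U_{M^{'}}$ of $x_{s}^{M^{*}}$ for which the set $A=\{n\in \mathbb N : \eta_{n}\widetilde{\notin} U_{M^{'}}\}$ does not belong to $\mathcal{I}$. Since $\mathcal{I}$ is admissible it contains every singleton, and hence by the ideal axioms every finite subset of $\mathbb N$; therefore $A\notin \mathcal{I}$ forces $A$ to be infinite. I would then write $A=\{n_{1}<n_{2}<\cdots\}$ and consider the subsequence $\{\eta_{n_{k}}\}_{k\in \mathbb N}$.

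Next I would feed this subsequence into the hypothesis. Being itself a subsequence of $\{\eta_{n}\}_{n\in \mathbb N}$, the sequence $\{\eta_{n_{k}}\}_{k\in \mathbb N}$ admits, by assumption, a further subsequence $\{\eta_{n_{k_{j}}}\}_{j\in \mathbb N}$ that is $\mathcal{I}$-soft convergent to $x_{s}^{M^{*}}$. Here it is important to read the $\mathcal{I}$-soft convergence of this sub-subsequence in terms of its own index variable $j$: for the particular neighbourhood $U_{M^{'}}$ above we must then have $\{j\in \mathbb N : \eta_{n_{k_{j}}}\widetilde{\notin} U_{M^{'}}\}\in \mathcal{I}$.

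The contradiction now comes directly from the construction of $A$. Every index $n_{k_{j}}$ lies in $A$, so by the very definition of $A$ we have $\eta_{n_{k_{j}}}\widetilde{\notin} U_{M^{'}}$ for every $j\in \mathbb N$. Hence $\{j\in \mathbb N : \eta_{n_{k_{j}}}\widetilde{\notin} U_{M^{'}}\}=\mathbb N$, which cannot lie in $\mathcal{I}$ because $\mathcal{I}$ is non-trivial and $\mathbb N\notin \mathcal{I}$. This contradicts the $\mathcal{I}$-soft convergence of the sub-subsequence just obtained, and therefore $\{\eta_{n}\}_{n\in \mathbb N}$ must be $\mathcal{I}$-soft convergent to $x_{s}^{M^{*}}$.

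I expect the only genuinely delicate point to be the bookkeeping in the middle step, namely keeping straight that $\mathcal{I}$-soft convergence of any (sub)sequence is always measured against the running index of that sequence, so that passing from $\{\eta_{n}\}$ to a subsequence resets the relevant index set to $\mathbb N$. Once this is set up correctly, the crucial observation that a subsequence extracted entirely from $A$ has all of its terms outside $U_{M^{'}}$ makes the set of exceptional indices equal to all of $\mathbb N$, and the conclusion is then immediate from the non-triviality of $\mathcal{I}$.
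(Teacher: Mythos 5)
Your proof is correct; the paper itself gives no argument here (its proof reads ``Proof is trivial, so omitted''), and your contradiction argument --- extracting a subsequence entirely from the exceptional set $A=\{n\in \mathbb N : \eta_{n}\widetilde{\notin} U_{M^{'}}\}\notin \mathcal{I}$ (infinite because the admissible ideal $\mathcal{I}$ contains all finite sets) and observing that any further subsequence of it has exceptional index set equal to all of $\mathbb N\notin \mathcal{I}$ --- is precisely the standard argument the authors presumably had in mind. Your bookkeeping remark is also the right call: since the paper's definition of $\mathcal{I}$-soft convergence applies to sequences indexed by $\mathbb N$, a subsequence must be tested against its own running index, and that is exactly the reading under which the final contradiction with the non-triviality of $\mathcal{I}$ goes through.
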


\begin{proof}
Proof is trivial, so omitted.
\end{proof}

%%%%%%%%%%%%%%%%%%%%%%%%%%%%%%%%%%%%%%%%%%%%%%%%%%%%%%%%%%%%%%%%%%%%%%%%%%%%%%%%%%%%%%%%%%%%%%%%%%%%%%%%%%%%%%%%%%%%%%%%%%%%%%%%%%%%%%%%%%%%%%%%
%%%%%%%%%%

\section{\textbf{$\mathcal{I}^{*}$-soft convergence in soft topological spaces}}
In this section following Kostyrko et al. \cite{Kos1} and Lahiri et al. \cite{Lah1}, we introduce the notion of $\mathcal{I}^{*}$-soft convergence of sequences of soft points in a soft topological space. 

%definition 4.1%
\begin{definition}
Let $(G_{M},\widetilde{\tau})$ be a soft topological space. Then, a sequence $\{\eta_{n}\}_{n\in \mathbb N}$ of soft points in $G_{M}$ is said to be $\mathcal{I}^{*}$-soft convergent to a soft point $x_{s}^{M^{*}}$ in $G_{M}$ if there exists $P= \{p_{1}< 
p_{2}< \cdots< p_{k}<\cdots\}\in \mathcal{F}(\mathcal{I})$ such that $\displaystyle{\lim_{k\rightarrow \infty}}
 \eta_{p_{k}}= x_{s}^{M^{*}}$. In this case, we write $\mathcal{I}^{*}-\displaystyle{\lim_{n\rightarrow \infty}}= x_{s}^{M^{*}}$ and $x_{s}^{M^{*}}$ is called $\mathcal{I}^{*}$-soft limit of $\{\eta_{n}\}_{n\in \mathbb N}$.
\end{definition}
 
%theorem 4.2%
\begin{theorem}\label{Thh1}
Let $(G_{M},\widetilde{\tau})$ be a soft topological space and $\{\eta_{n}\}_{n\in \mathbb N}$ be a sequence of soft points in $G_{M}$. If $\mathcal{I}^{*}-\displaystyle{\lim_{n\rightarrow \infty}} {\eta_{n}}= x_{s}^{M^{*}}$, then $\mathcal{I}-\displaystyle{\lim_{n\rightarrow \infty}} {\eta_{n}}= x_{s}^{M^{*}}$ and if $(G_{M}, \widetilde{\tau})$ is soft Hausdorff then $\mathcal{I}^{*}$-soft limit of $\{\eta_{n}\}_{n\in \mathbb N}$ is unique.
\end{theorem}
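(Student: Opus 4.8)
The plan is to establish the two assertions in turn, the uniqueness statement following quickly from the first implication together with Theorem~\ref{Thh2}. Throughout I would work directly from the definition of $\mathcal{I}^{*}$-soft convergence and exploit the admissibility of $\mathcal{I}$.

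For the implication, I would begin from the hypothesis: there is a set $P=\{p_{1}<p_{2}<\cdots\}\in\mathcal{F}(\mathcal{I})$ with $\displaystyle{\lim_{k\rightarrow\infty}}\eta_{p_{k}}=x_{s}^{M^{*}}$ in the ordinary soft sense. By the definition of the associated filter, there is a set $H\in\mathcal{I}$ with $P=\mathbb{N}-H$, so $H=\mathbb{N}-P\in\mathcal{I}$. Now I would fix an arbitrary soft neighbourhood $U_{M'}$ of $x_{s}^{M^{*}}$; ordinary soft convergence of $\{\eta_{p_{k}}\}$ supplies a threshold $k_{0}$ with $\eta_{p_{k}}\widetilde{\in}U_{M'}$ for all $k\geq k_{0}$. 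The key step is the set inclusion
\[
\{n\in\mathbb{N}:\eta_{n}\widetilde{\notin}U_{M'}\}\subseteq H\cup\{p_{1},\ldots,p_{k_{0}-1}\}.
\]
Indeed, any index outside $H$ lies in $P$, hence equals some $p_{k}$, and if $k\geq k_{0}$ then $\eta_{p_{k}}$ is already in $U_{M'}$; so the only indices outside $H$ that can fail the neighbourhood condition are the finitely many $p_{1},\ldots,p_{k_{0}-1}$.

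To finish Part~1 I would invoke admissibility: every singleton, and hence every finite subset of $\mathbb{N}$, belongs to $\mathcal{I}$, so $\{p_{1},\ldots,p_{k_{0}-1}\}\in\mathcal{I}$; combining this with $H\in\mathcal{I}$ and closure under finite unions gives $H\cup\{p_{1},\ldots,p_{k_{0}-1}\}\in\mathcal{I}$, and the hereditary property of the ideal then forces $\{n:\eta_{n}\widetilde{\notin}U_{M'}\}\in\mathcal{I}$. Since $U_{M'}$ was arbitrary, this is precisely $\mathcal{I}\text{-}\displaystyle{\lim_{n\rightarrow\infty}}\eta_{n}=x_{s}^{M^{*}}$. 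For the uniqueness claim, I would suppose $\{\eta_{n}\}$ is $\mathcal{I}^{*}$-soft convergent to both $x_{s}^{M^{*}}$ and $y_{s'}^{M^{**}}$; by the implication just proved it is then $\mathcal{I}$-soft convergent to each, and since $(G_{M},\widetilde{\tau})$ is soft Hausdorff, Theorem~\ref{Thh2} gives uniqueness of the $\mathcal{I}$-soft limit, whence $x_{s}^{M^{*}}=y_{s'}^{M^{**}}$.

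The main obstacle is the bookkeeping in Part~1, not any deep idea: one must confirm that $\{\eta_{p_{k}}\}$ converges in the \emph{ordinary} soft sense so that the threshold $k_{0}$ genuinely exists, and then see that the bad index set is trapped inside the union of the filter-complement $H$ with a truly finite remainder. Admissibility is exactly what is needed to absorb that finite remainder into $\mathcal{I}$; once the displayed inclusion is in place, everything else is a direct appeal to the ideal axioms and to Theorem~\ref{Thh2}.
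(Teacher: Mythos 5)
Your proposal is correct and follows essentially the same route as the paper's proof: the same inclusion of the bad index set in $(\mathbb{N}-P)\cup\{p_{1},\ldots,p_{k_{0}}\}$ (modulo a harmless shift of the threshold index), the same use of admissibility and the ideal axioms to conclude $\mathcal{I}$-soft convergence, and the same appeal to Theorem~\ref{Thh2} for uniqueness in the soft Hausdorff case. Your write-up is in fact slightly more explicit than the paper's in spelling out the uniqueness step, but there is no substantive difference.
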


\begin{proof}
Since $\mathcal{I}^{*}-\displaystyle{\lim_{n\rightarrow \infty}} {\eta_{n}}= x_{s}^{M^{*}}$, so there exists $P= \{p_{1}< p_{2}< \cdots< p_{k}< \cdots\}\in \mathcal{F}(\mathcal{I})$ such that $\displaystyle{\lim_{k\rightarrow \infty}} {\eta_{p_{k}}}= x_{s}^{M^{*}}$. Let $U_{M^{'}}$ be a soft neighbourhood of $x_{s}^{M^{*}}$. Then, there exists $k_{0}\in \mathbb N$, such that $\eta_{p_{k}}\widetilde{\in} U_{M^{'}},~\forall~k>k_{0}$. Now, $\{n\in \mathbb N: \eta_{n}\widetilde{\notin} U_{M^{'}}\}\subset (\mathbb N- P) \cup \{p_{1}, p_{2}, \cdots, p_{k_{0}}\}$. Since, $(\mathbb N- P)\cup \{p_{1}, p_{2}, \cdots, p_{k_{0}}\}\in \mathcal{I}$ so, $\{n\in \mathbb N: \eta_{n}\widetilde{\notin} U_{M^{'}}\}\in \mathcal{I}$. Therefore, $\mathcal{I}-\displaystyle{\lim_{n\rightarrow \infty}} {\eta_{n}}= x_{s}^{M^{*}}$. Now if $(G_{M}, \widetilde{\tau})$ is soft Hausdorff, then `Theorem \ref{Thh2}', $\mathcal{I}^{*}$-soft limit of $\{\eta_{n}\}_{n\in \mathbb N}$ is unique.

\end{proof}

%theorem 4.3%
\begin{theorem}
Let $(G_{M},\widetilde{\tau})$ be a soft topological space, having no soft limit point. Then $\mathcal{I}$ and $\mathcal{I}^{*}$ soft convergence coincide.
\end{theorem}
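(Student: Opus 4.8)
The plan is to use the implication already established in Theorem \ref{Thh1}, namely that $\mathcal{I}^{*}$-soft convergence always entails $\mathcal{I}$-soft convergence; so the task reduces to proving the reverse implication under the standing hypothesis. Thus I would assume $\mathcal{I}-\displaystyle{\lim_{n\rightarrow\infty}}\eta_{n}=x^{M^{*}}_{s}$ and aim to produce a set $P\in\mathcal{F}(\mathcal{I})$ along which $\{\eta_{n}\}_{n\in\mathbb N}$ converges to $x^{M^{*}}_{s}$ in the ordinary soft sense.

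The first and decisive step is to translate the hypothesis that $(G_{M},\widetilde{\tau})$ has no soft limit point into a usable local statement. Reading the definition of soft limit point applied to the soft subset $G_{M}$ itself, the hypothesis says that no soft point $x^{M^{*}}_{s}$ is a soft limit point of $G_{M}$; hence there is a soft neighbourhood $H$ of $x^{M^{*}}_{s}$ with $(H\,\widetilde{-}\,\{x^{M^{*}}_{s}\})\,\widetilde{\cap}\,G_{M}=\widetilde{\emptyset}$. Choosing a soft open $H^{'}$ with $x^{M^{*}}_{s}\,\widetilde{\in}\,H^{'}\,\widetilde{\subset}\,H$ and using that every soft open set is a soft subset of $G_{M}$, I would deduce $H^{'}\,\widetilde{-}\,\{x^{M^{*}}_{s}\}=\widetilde{\emptyset}$, whence $H^{'}=\{x^{M^{*}}_{s}\}$. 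So the singleton $\{x^{M^{*}}_{s}\}$ is itself a soft open neighbourhood of $x^{M^{*}}_{s}$; in other words, every soft point of $G_{M}$ is isolated.

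Applying $\mathcal{I}$-soft convergence to this distinguished neighbourhood $\{x^{M^{*}}_{s}\}$ then gives $B:=\{n\in\mathbb{N}:\eta_{n}\,\widetilde{\notin}\,\{x^{M^{*}}_{s}\}\}\in\mathcal{I}$. Since a soft point lies in $\{x^{M^{*}}_{s}\}$ exactly when it equals $x^{M^{*}}_{s}$, the complement $P:=\mathbb{N}-B=\{n:\eta_{n}=x^{M^{*}}_{s}\}$ belongs to $\mathcal{F}(\mathcal{I})$, and writing $P=\{p_{1}<p_{2}<\cdots\}$ the subsequence $\{\eta_{p_{k}}\}$ is constantly $x^{M^{*}}_{s}$, so $\displaystyle{\lim_{k\rightarrow\infty}}\eta_{p_{k}}=x^{M^{*}}_{s}$. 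This is precisely the defining condition for $\mathcal{I}^{*}-\displaystyle{\lim_{n\rightarrow\infty}}\eta_{n}=x^{M^{*}}_{s}$, which together with Theorem \ref{Thh1} yields the asserted coincidence.

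The one technical point I would be careful about, and the main (mild) obstacle, is that $P$ must be infinite for the enumeration $\{p_{1}<p_{2}<\cdots\}$ and the ordinary limit to be meaningful. This follows from non-triviality of $\mathcal{I}$: were $P$ finite, admissibility would give $P\in\mathcal{I}$ and then $\mathbb{N}=P\cup B\in\mathcal{I}$, contradicting $\mathbb{N}\notin\mathcal{I}$. Beyond this, the argument is a direct unwinding of the definitions, with the genuine content residing entirely in the isolation step of the second paragraph.
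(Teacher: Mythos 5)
Your argument is correct and takes essentially the same route as the paper's proof: both reduce the problem via Theorem \ref{Thh1} to showing that $\mathcal{I}$-soft convergence implies $\mathcal{I}^{*}$-soft convergence, observe that the absence of soft limit points makes the singleton $\{x_{s}^{M^{*}}\}$ a soft open neighbourhood, and take $P=\{n\in\mathbb N:\eta_{n}=x_{s}^{M^{*}}\}\in\mathcal{F}(\mathcal{I})$, along which the sequence is constant. If anything, your write-up is more complete than the paper's, since you justify the isolation step (the paper merely asserts that $\{x_{s}^{M^{*}}\}$ is soft open) and verify via admissibility and non-triviality of $\mathcal{I}$ that $P$ is infinite, a point the paper passes over.
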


\begin{proof}
In the view of 'Theorem \ref{Thh1}', we have only to prove $\mathcal{I}$-soft convergence implies $\mathcal{I^{*}}$-soft convergence. 

Let $\{\eta_{n}\}_{n\in \mathbb N}$ be a sequence of soft points in $G_{M}$ such that  $\mathcal{I}-\displaystyle{\lim_{n\rightarrow \infty}} {\eta_{n}}= x_{s}^{M^{*}}$. Since, $(G_{M}, \widetilde{\tau})$ has no soft limit point, so $U_{M^{'}}=x_{s}^{M^{*}}$ is soft open. So, $\{n\in \mathbb N: \eta_{n}\widetilde{\notin} U_{M^{'}}\}\in \mathcal{I}$. Then, $\{n\in \mathbb N: \eta_{n}\widetilde{\in} U_{M^{'}}\}\in \mathcal{F}(\mathcal{I})$, i.e, $\{n\in \mathbb N: \eta_{n}=x_{s}^{M^{*}}\}\in \mathcal{F}(\mathcal{I})$. let $\{n\in \mathbb N: \eta_{n}=x_{s}^{M^{*}}\}= P= \{p_{1}< p_{2}< \cdots< p_{k}< \cdots \}$. Then $P\in \mathcal{F}(\mathcal{I})$ and $\displaystyle{\lim_{k\rightarrow \infty}} {\eta_{p_{k}}}= x_{s}^{M^{*}}$. So, $\mathcal{I}^{*}-\displaystyle{\lim_{n\rightarrow \infty}} {\eta_{n}}= x_{s}^{M^{*}}$.

\end{proof}

%remark 4.4%
\begin{remark}
In an arbitrary soft topological space, equivalency between $\mathcal{I}$ and $\mathcal{I}^{*}$-soft convergence does not hold. 
\end{remark}

%theorem 4.5%
We now consider a condition namely AP condition under which $\mathcal{I}$ and $\mathcal{I^{*}}$ soft convergence coincide.This condition (AP) was introduced by Kostyrko et al. \cite{Kos1} which is similar to the (APO) condition used in \cite{Con} and \cite{Fast1}.

%definition 4.6%
\begin{definition} \cite{Kos1}
An admissible ideal $\mathcal{I}$ of $\mathbb N$ is said to satisfy the condition AP, if for every 
countable family of mutually disjoint sets $\{H_{1}, H_{2},\cdots~\cdots\}$ belonging to
 $\mathcal{I}$, there exists a countable family of sets $\{K_{1}, K_{2},\cdots~\cdots\}$ such that $H_{i} \Delta K_{i}$ is finite, for all $i\in \mathbb N$ and $K=\overset{\infty}{\underset{i=1}{\cup}} K_{i}\in \mathcal{I}$. 
\end{definition}

%theorem 4.7%
\begin{theorem}
$(i)$ If $(G_{M},\widetilde{\tau})$ is a soft first countable soft topological space and $\mathcal{I}$ satisfies
 condition (AP), then for any sequence $\{\eta_{n}\}_{n\in \mathbb N}$ of soft
 points in $G_{M}$, $\mathcal{I}-\displaystyle{\lim_{n\rightarrow \infty}} {\eta_{n}}= x_{s}^{M^{*}}~\implies~\mathcal{I}^{*}-\displaystyle{\lim_{n\rightarrow \infty}} {\eta_{n}}= x_{s}^{M^{*}}$.

$(ii)$ If $(G_{M},\widetilde{\tau})$ is a soft first countable, soft $T_{1}$ space having at least one soft limit point and if for every sequence $\{\eta_{n}\}_{n\in \mathbb N}$ of soft points in $G_{M}$, $\mathcal{I}$-convergence of $\{\eta_{n}\}$ implies $\mathcal{I}^{*}$-convergence of $\{\eta_{n}\}$, then $\mathcal{I}$ has the property (AP).
\end{theorem}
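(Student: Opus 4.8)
The plan is to prove the two implications separately, in both cases exploiting the nested countable soft local base $\{H_{M_{i}}\}_{i\in\mathbb N}$ with $H_{M_{1}}\widetilde{\supset} H_{M_{2}}\widetilde{\supset}\cdots$ at the relevant soft point, whose existence is guaranteed by Theorem \ref{Th1} (soft first countability).

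For $(i)$, assume $\mathcal{I}-\lim_{n}\eta_{n}=x_{s}^{M^{*}}$. For each $i$ I would set $A_{i}=\{n\in\mathbb N:\eta_{n}\widetilde{\notin} H_{M_{i}}\}$; by the definition of $\mathcal{I}$-soft convergence each $A_{i}\in\mathcal I$, and since the base is nested these sets increase, $A_{1}\subset A_{2}\subset\cdots$. To feed them into condition (AP) I would disjointify, putting $D_{1}=A_{1}$ and $D_{i}=A_{i}\setminus A_{i-1}$ for $i\geq 2$, so that $\{D_{i}\}$ is a mutually disjoint family in $\mathcal I$. Condition (AP) then supplies $\{K_{i}\}$ with $D_{i}\Delta K_{i}$ finite and $K=\bigcup_{i}K_{i}\in\mathcal I$. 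Taking $P=\mathbb N\setminus K\in\mathcal F(\mathcal I)$, the crux is a telescoping estimate: for fixed $j$ one has $\bigcup_{i\leq j}D_{i}=A_{j}$, hence $A_{j}\Delta\bigl(\bigcup_{i\leq j}K_{i}\bigr)$ is finite; since every $n\in P$ avoids $\bigcup_{i\leq j}K_{i}$, only finitely many $n\in P$ can lie in $A_{j}$, i.e. $\eta_{n}\widetilde{\in} H_{M_{j}}$ for all but finitely many $n\in P$. As $\{H_{M_{j}}\}$ is a soft local base, this is precisely ordinary soft convergence $\eta_{p_{k}}\to x_{s}^{M^{*}}$, giving $\mathcal I^{*}$-soft convergence.

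For $(ii)$, let $x_{s}^{M^{*}}$ be a soft limit point of $G_{M}$ with nested base $\{H_{M_{i}}\}$; because it is a soft limit point, each $H_{M_{i}}$ contains a soft point $z_{i}\neq x_{s}^{M^{*}}$. Given an arbitrary mutually disjoint family $\{H_{i}\}\subset\mathcal I$, I would construct the sequence $\eta_{n}=z_{i}$ when $n\in H_{i}$ (well defined by disjointness) and $\eta_{n}=x_{s}^{M^{*}}$ otherwise. Using the nesting, for any soft neighbourhood $U$ with $H_{M_{j}}\widetilde{\subset} U$ one checks $\{n:\eta_{n}\widetilde{\notin}U\}\subset H_{1}\cup\cdots\cup H_{j-1}\in\mathcal I$, so $\mathcal I-\lim_{n}\eta_{n}=x_{s}^{M^{*}}$. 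The hypothesis then yields $P=\{p_{1}<p_{2}<\cdots\}\in\mathcal F(\mathcal I)$ with $\eta_{p_{k}}\to x_{s}^{M^{*}}$; setting $K=\mathbb N\setminus P$ and $K_{i}=H_{i}\cap K$, one gets $K_{i}\subset H_{i}$, $\bigcup_{i}K_{i}\subset K\in\mathcal I$, and $H_{i}\Delta K_{i}=H_{i}\cap P$, so verifying (AP) reduces to showing each $H_{i}\cap P$ is finite.

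This last reduction is where the soft $T_{1}$ axiom together with the limit-point hypothesis does the real work, and it is the main obstacle. If $H_{i}\cap P$ were infinite, then $\eta_{p_{k}}=z_{i}$ for infinitely many $k$; but soft $T_{1}$ provides a soft open set $P_{M^{'}}$ with $x_{s}^{M^{*}}\widetilde{\in}P_{M^{'}}$ and $z_{i}\widetilde{\notin}P_{M^{'}}$, contradicting $\eta_{p_{k}}\to x_{s}^{M^{*}}$, which forces $\eta_{p_{k}}\widetilde{\in}P_{M^{'}}$ for all large $k$. Hence each $H_{i}\cap P$ is finite, so $H_{i}\Delta K_{i}$ is finite and $\bigcup_{i}K_{i}\in\mathcal I$, establishing (AP). I expect the points needing most care to be the verification that the constructed $\{\eta_{n}\}$ is genuinely $\mathcal I$-soft convergent and the correct use of "soft limit point of $G_{M}$" that makes the auxiliary points $z_{i}$ available in every basic neighbourhood.
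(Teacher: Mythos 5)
Your proposal is correct and follows essentially the same route as the paper: in $(i)$ you disjointify the increasing exceptional sets $A_{i}$ along a nested soft local base and feed the result into (AP) exactly as the paper does (your $D_{i}=A_{i}\setminus A_{i-1}$ is in fact the clean form of the paper's sets $H_{m}$, whose printed definition looks like a typo), and in $(ii)$ you build the same auxiliary sequence, constant on the given disjoint family, and take $K_{j}=H_{j}\cap(\mathbb N\setminus P)$ just as the paper does. The only (harmless) difference is your use of soft $T_{1}$: you separate $x_{s}^{M^{*}}$ from $z_{i}$ directly from the definition, whereas the paper invokes $G_{M}\widetilde{-}\{\eta_{j}\}$ as a soft neighbourhood of $x_{s}^{M^{*}}$; both are valid, and yours avoids the implicit lemma that soft points are soft closed in soft $T_{1}$ spaces.
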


\begin{proof}
$(i)$ Let $\{\eta_{n}\}_{n\in \mathbb N}$ be a sequence of soft points in $G_{M}$ such that $\mathcal{I}-\displaystyle{\lim_{n\rightarrow \infty}} {\eta_{n}}= x_{s}^{M^{*}}$, where $x_{s}^{M^{*}}\widetilde{\in} G_{M}$. Since $(G_{M}, \widetilde{\tau})$ is soft first countable so by `Theorem-\ref{Th1}', there exists a countable collection of soft local base $\{G_{n}(x_{s}^{M^{*}}): n\in \mathbb N\}$ at $x_{s}^{M^{*}}$, such that $G_{n+1}(x_{s}^{M^{*}})\widetilde{\subset} G_{n}(x_{s}^{M^{*}}),~\forall~n\in \mathbb N$.

Let $H_{1}=\{n\in \mathbb N: \eta_{n}\widetilde{\notin} G_{1}(x_{s}^{M^{*}})\}$ and $H_{m}=\{n\in \mathbb N: \eta_{n}\widetilde{\notin} G_{m}(x_{s}^{M^{*}})\widetilde{-}\\ G_{m-1}(x_{s}^{M^{*}})\}$, $\forall m\geq 2$. Then $H_{m}\in \mathcal{I},~\forall m \in \mathbb N$ and $H_{i}\cap H_{j}= \emptyset, \forall~i,j\in \mathbb N$. Since $\mathcal{I}$ satisfies (AP) condition, so there exists
  a countable collection of sets, say $\{K_{m}: m\in \mathbb N\}$ such that $H_{m}\Delta K_{m}$ is finite, $\forall m\in 
\mathbb N$ and $K=\underset{m\in \mathbb N}{\cup}{K_{m}}\in \mathcal{I}$. Then $R=\mathbb N - K=\{r_{1}< r_{2}<\cdots< r_{p}< \cdots~\cdots\} \in \mathcal{F(\mathcal{I})}$. Let $V_{M^{'}}$ be a soft neighbourhood of $x_{s}^{M^{*}}$. Since $\{\eta_{n}\}_{n\in \mathbb N}$ is $\mathcal{I}$-convergent to $x_{s}^{M^{*}}$ so $\{n\in \mathbb N:
 \eta_{n}\widetilde{\notin} V_{M^{'}}\}\in \mathcal{I}$. Then there exists $q\in \mathbb N$ such that $G_{n}(x_{s}^{M^{*}})
\widetilde{\subset}V_{M^{'}}, \forall~n \geq q$. Then, $\{n\in \mathbb N: \eta_{n}\widetilde{\notin} V_{M^{'}}\}\subset \underset{i=1}{\overset{q}{\cup}} H_{i} $. Since, $H_{m}\Delta K_{m}$ is finite, $\forall m\in \mathbb N$, so $\underset{i=1}{\overset{q}{\cup}} (H_{i} \Delta K_{i})$ is also finite. So, there exists $n_{0}\in \mathbb N$ such that $\forall~n\in \mathbb N$ with $n>n_{0}$, $n\in \underset{i=1}{\overset{q}{\cup}} H_{i}$ if and only if $n\in \underset{i=1}{\overset{q}{\cup}} K_{i}$. We Choose, $t\in \mathbb N$ such that $r_{t}> n_{0}$. Since $r_{j}\notin K\supset \underset{i=1}{\overset{q}{\cup}} K_{i}$ for any $j\in \mathbb N$, so if $p>t$, then $r_{p}> r_{t}>n_{0}$ and so $r_{p}\notin \{n\in \mathbb N : \eta_{n}\notin V_{M^{'}}\}^{C}$, $\forall p>t$ $\implies$ $r_{p}\in \{n\in \mathbb N : \eta_{n}\widetilde{\in} V_{M^{'}}\},~ \forall p>t$. So, $\displaystyle{\lim_{p\rightarrow \infty}}{\eta_{r_{p}}}= x_{s}^{M^{*}}$. Hence, $\mathcal{I}^{*}-\displaystyle{\lim_{n\rightarrow \infty}} {\eta_{n}}= x_{s}^{M^{*}}$.

$(ii)$ Let $x_{s}^{M^{*}}$ be a soft limit point of $G_{M}$. Since, $(G_{M}, \widetilde{\tau})$ is soft first countable, soft $T_{1}$ space, so there exist a soft local base $\{G_{n}(x_{s}^{M^{*}})\}_{n\in \mathbb N}$ at $x_{s}^{M^{*}}$ with $G_{n+1}(x_{s}^{M^{*}})\widetilde{\subset} G_{n}(x_{s}^{M^{*}}),~\forall~n\in \mathbb N$ and a sequence $\{\eta_{n}\}_{n\in \mathbb N}$ of distinct soft points in $G_{M}$ such that $\displaystyle{\lim_{n\rightarrow \infty}} \eta_{n}= x_{s}^{M^{*}}$, $\eta_{n}\widetilde{\in} G_{n}(x_{s}^{M^{*}})$, $\forall n\in \mathbb N$ and $\eta_{n} \neq x_{s}^{M^{*}}$ for any $n\in \mathbb N$. Let $\{H_{n}\}_{n\in \mathbb N}$ be a sequence of mutually disjoint non-empty sets from $\mathcal{I}$. Let us define a sequence $\{\gamma_{n}\}_{n\in \mathbb N}$ of soft points in $G_{M}$ by 
\begin{align*}
\gamma_{n}=\begin{cases}
\eta_{j}, &\mbox{if}~ n\in H_{j}\\
x_{s}^{M^{*}}, &\mbox{if}~ n\in \mathbb N-(\underset{j\in \mathbb N}{\cup}{H_{j}}).
\end{cases}
\end{align*}
Let $V_{M^{'}}$ be a soft neighbourhood of $x_{s}^{M^{*}}$. Then, there exists $q\in \mathbb N$, such that $G_{n}(x_{s}^{M^{*}})\widetilde{\subset} V_{M^{'}},~\forall~n\geq q$. So, $\{n\in \mathbb N: \gamma_{n}  \widetilde{\notin} V_{M^{'}}\}\subset H_{1}\cup H_{2}\cup \cdots \cup H_{q-1}$. Since, $H_{1}, H_{2},..., H_{q-1}\in \mathcal{I}$, so $H_{1}\cup H_{2}\cup \cdots \cup H_{q-1}\in \mathcal{I}$ and so $\{n\in \mathbb N: \gamma_{n}  \widetilde{\notin} V_{M^{'}}\}\in \mathcal{I}$. So, $\mathcal{I}$-$\displaystyle{\lim_{n\rightarrow \infty}} {\gamma_{n}}= x_{s}^{M^{*}}$. Then, by the given condition, $\mathcal{I}^{*}$-$\displaystyle{\lim_{n\rightarrow \infty}} {\gamma_{n}}= x_{s}^{M^{*}}$. Then, there exists  $R= \{r_{1}< r_{2}< \cdots< r_{k}< \cdots\}\in \mathcal{F}(\mathcal{I})$ such that $\displaystyle{\lim_{k\rightarrow \infty}} {\gamma_{r_{k}}}= x_{s}^{M^{*}}$. Let $H=\mathbb N - R$. Then $H\in \mathcal{I}$. Let $K_{j}= H_{j}\cap H,~\forall j\in \mathbb N$. Since, $H\in \mathcal{I}$ and $K_{j}\subset H,~\forall j\in \mathbb N$, so $K_{j}\in \mathcal{I},~\forall j\in \mathbb N$ and since $\underset{j\in \mathbb N}{\cup} {K_{j}}\subset H$, $\underset{j\in \mathbb N}{\cup} {K_{j}}\in \mathcal{I}$.

 Let $j\in \mathbb N$. If $H_{j}\cap R$ is infinite, then $\gamma_{r_{k}}= \eta_{j}$ for infinitely many $r_{k}$'s. But $\eta_{j}\neq x_{s}^{M^{*}}$ and so $G_{M}\widetilde{-} \{\eta_{j}\}$ is a soft neighbourhood of $x_{s}^{M^{*}}$ and $\gamma_{r_{k}}\widetilde{\notin} G_{M}\widetilde{-}\{\eta_{j}\}$ for infinitely many $r_{k}$'s which contradicts that $\displaystyle{\lim_{k\rightarrow \infty}} {\gamma_{r_{k}}}= x_{s}^{M^{*}}$. So $H_{j}\cap R$ is a finite set.  

So, there exists $l_{0}\in \mathbb N$ such that $H_{j}\subset (H_{j}\cap K_{j})\cup \{r_{1}, r_{2}, \cdots, r_{l_{0}}\}$. Then, $H_{j}\Delta K_{j}= H_{j}- K_{j}\subset \{r_{1}, r_{2}, \cdots, r_{l_{0}}\}$. So, $H_{j}\Delta K_{j}$ is finite.  

Thus, we get a countable family of sets $\{K_{j}: j\in \mathbb N\}$ such that $H_{j}\Delta K_{j}$ is finite, $\forall j\in \mathbb N$ and $\underset{j\in \mathbb N}{\cup}{K_{j}}\in  \mathcal{I}$. So, `$\mathcal{I}$' satisfies (AP) condition.
\end{proof}

\section{\textbf{$\mathcal{I}$-soft limit point and $\mathcal{I}$-soft cluster point }}

In this section, following Kostyrko et al. \cite{Kos1} and Lahiri et al. \cite{Lah1}, we introduce the notion of $\mathcal{I}$-soft limit point and $\mathcal{I}$-soft cluster point of a sequence of soft points in a soft topological space.

%definition 5.1%
\begin{definition}
Let $(G_{M},\widetilde{\tau})$ be a soft topological space and $w=\{\eta_{n}\}_{n\in \mathbb N}$ be a sequence of soft points in $G_{M}$. A soft
 point $x_{s}^{M^{*}}$ is said to be an $\mathcal{I}$-soft limit point of $w=\{\eta_{n}\}_{n\in \mathbb N}$, if there exists a subset $P=\{p_{1}< p_{2}< \cdots <p_{k}< \cdots\}$ of
 $\mathbb N$ such that $P\notin \mathcal{I}$ and $\displaystyle{\lim_{k\rightarrow \infty}} \eta_{p_{k}}= x_{s}^{M^{*}}$. 

The soft set of all $\mathcal{I}$-soft limit points of $w=\{\eta_{n}\}_{n\in \mathbb N}$ is denoted by ${\mathcal{I}}({\widetilde{\Lambda}}_{w})$.
\end{definition}

%definition 5.2%
\begin{definition}
Let $(G_{M},\widetilde{\tau})$ be a soft topological space and $w=\{\eta_{n}\}_{n\in \mathbb N}$ be a sequence of soft points in $G_{M}$. A soft
 point $x_{s}^{M^{*}}$ is said to be an $\mathcal{I}$-soft cluster point of $w=\{\eta_{n}\}_{n\in \mathbb N}$, if for any soft neighbourhood $U_{M^{''}}$ of $x_{s}^{M^{*}}$, $\{n\in \mathbb N
: \eta_{n}\widetilde{\in} U_{M^{''}}\}\notin \mathcal{I}$.

The soft set of all $\mathcal{I}$-soft cluster points of $w=\{\eta_{n}\}_{n\in \mathbb N}$ is denoted by ${\mathcal{I}}({\widetilde{\Gamma}}_{w})$. 
\end{definition}

%theorem 5.3%
\begin{theorem}
Let $(G_{M},\widetilde{\tau})$ be a soft topological space. Then, for any sequence $w=\{\eta_{n}\}_{n\in \mathbb N}$ of soft points in $G_{M}$, we have ${\mathcal{I}}({\widetilde{\Lambda}}_{w})\widetilde{\subset} {\mathcal{I}}({\widetilde{\Gamma}}_{w})$.
\end{theorem}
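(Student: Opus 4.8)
The plan is to reduce the soft-set inclusion $\mathcal{I}(\widetilde{\Lambda}_{w})\widetilde{\subset}\mathcal{I}(\widetilde{\Gamma}_{w})$ to a pointwise statement: show that every $\mathcal{I}$-soft limit point of $w$ is also an $\mathcal{I}$-soft cluster point of $w$. Since each of these soft sets is, by definition, the soft union of all the soft points it contains (see the Remark following the definition of soft point), proving that each soft point of $\mathcal{I}(\widetilde{\Lambda}_{w})$ lies in $\mathcal{I}(\widetilde{\Gamma}_{w})$ immediately gives the required soft subset relation.

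First I would fix an arbitrary soft point $x_{s}^{M^{*}}\widetilde{\in}\mathcal{I}(\widetilde{\Lambda}_{w})$. By the definition of an $\mathcal{I}$-soft limit point there is a set $P=\{p_{1}<p_{2}<\cdots\}\subset\mathbb N$ with $P\notin\mathcal{I}$ and $\lim_{k\rightarrow\infty}\eta_{p_{k}}=x_{s}^{M^{*}}$ in the sense of ordinary soft convergence. Now I would take an arbitrary soft neighbourhood $U_{M^{''}}$ of $x_{s}^{M^{*}}$ and invoke soft convergence of the subsequence $\{\eta_{p_{k}}\}_{k\in\mathbb N}$ to obtain $k_{0}\in\mathbb N$ with $\eta_{p_{k}}\widetilde{\in}U_{M^{''}}$ for all $k\geq k_{0}$, so that $\{p_{k}:k\geq k_{0}\}\subset\{n\in\mathbb N:\eta_{n}\widetilde{\in}U_{M^{''}}\}$.

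The main step is to conclude that $\{n\in\mathbb N:\eta_{n}\widetilde{\in}U_{M^{''}}\}\notin\mathcal{I}$, and this is where admissibility of $\mathcal{I}$ does the essential work. I would first argue that $\{p_{k}:k\geq k_{0}\}\notin\mathcal{I}$: otherwise, since $\mathcal{I}$ is admissible the finite set $\{p_{1},\ldots,p_{k_{0}-1}\}$ belongs to $\mathcal{I}$, and then $P=\{p_{k}:k\geq k_{0}\}\cup\{p_{1},\ldots,p_{k_{0}-1}\}$ would lie in $\mathcal{I}$ by the finite-union axiom, contradicting $P\notin\mathcal{I}$. With $\{p_{k}:k\geq k_{0}\}\notin\mathcal{I}$ established, the hereditary axiom of the ideal finishes the job: if $\{n:\eta_{n}\widetilde{\in}U_{M^{''}}\}$ were in $\mathcal{I}$, then its subset $\{p_{k}:k\geq k_{0}\}$ would also be in $\mathcal{I}$, a contradiction. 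Hence $\{n\in\mathbb N:\eta_{n}\widetilde{\in}U_{M^{''}}\}\notin\mathcal{I}$.

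Since $U_{M^{''}}$ was an arbitrary soft neighbourhood of $x_{s}^{M^{*}}$, the point satisfies the defining condition of an $\mathcal{I}$-soft cluster point, i.e.\ $x_{s}^{M^{*}}\widetilde{\in}\mathcal{I}(\widetilde{\Gamma}_{w})$; as $x_{s}^{M^{*}}$ was an arbitrary soft point of $\mathcal{I}(\widetilde{\Lambda}_{w})$, we obtain $\mathcal{I}(\widetilde{\Lambda}_{w})\widetilde{\subset}\mathcal{I}(\widetilde{\Gamma}_{w})$. I expect the only genuine subtlety to be the interplay between admissibility (used to discard the finitely many initial indices $p_{1},\ldots,p_{k_{0}-1}$) and the hereditary property of $\mathcal{I}$ (used to pass from the sub-index set to the full neighbourhood-index set); the remainder is a routine unwinding of the two definitions.
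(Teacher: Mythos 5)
Your proposal is correct and follows essentially the same argument as the paper's proof: fix a soft point of $\mathcal{I}(\widetilde{\Lambda}_{w})$, use soft convergence of the subsequence indexed by $P\notin\mathcal{I}$ to trap a tail of $P$ inside $\{n\in\mathbb N:\eta_{n}\widetilde{\in}U_{M^{''}}\}$, and then combine admissibility (to absorb the finite initial segment) with the hereditary and union axioms of the ideal to conclude that this set cannot lie in $\mathcal{I}$. The only difference is cosmetic --- you first show the tail $\{p_{k}:k\geq k_{0}\}\notin\mathcal{I}$ and then apply heredity, whereas the paper runs the same two steps inside a single contradiction --- so no further comment is needed.
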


\begin{proof}
Let $x_{s}^{M^{*}}\widetilde{\in} {\mathcal{I}}({\widetilde{\Lambda}}_{w})$. Then, there exists a set $P= \{p_{1}< p_{2}< \cdots< p_{k}< \cdots\}\notin \mathcal{I}$ such
 that $\displaystyle{\lim_{k\rightarrow \infty}} {\eta_{p_{k}}} = x_{s}^{M^{*}}$. Let $U_{M^{'}}$ be a soft neighbourhood of $x_{s}^{M^{*}}$. Then, there exists $k_{0}\in \mathbb N$ such that $\eta_{p_{k}}\widetilde{\in} U_{M^{'}},~\forall~k>k_{0}$. Then, $\{n\in \mathbb N: \eta_{n}\widetilde{\in} U_{M^{'}}\}\supset P-\{p_{1}, p_{2},\cdots, p_{k_{0}}\}$. If $\{n\in \mathbb N: \eta_{n}\widetilde{\in} U_{M^{'}}\}\in \mathcal{I}$, then, $P-\{p_{1}, p_{2},\cdots, p_{k_{0}}\}\in \mathcal{I}$ and so $P= (P-\{p_{1}, p_{2},\cdots, p_{k_{0}}\})\cup \{p_{1}, p_{2},\cdots, p_{k_{0}}\}\in \mathcal{I}$, a contradiction. Therefore, $\{n\in \mathbb N: \eta_{n}\widetilde{\in} U_{M^{'}}\}\notin \mathcal{I}$. So, $x_{s}^{M^{*}}$ is an $\mathcal{I}$-soft cluster point of $\{\eta_{n}\}_{n\in \mathbb N}$. Hence ${\mathcal{I}}({\widetilde{\Lambda}}_{w})\widetilde{\subset} {\mathcal{I}}({\widetilde{\Gamma}}_{w})$.
\end{proof}

%\begin{theorem}
%Let $(G_{M},\widetilde{\tau})$ be a soft topological space. Then, for any sequence $w=\{\eta_{n}\}_{n\in \mathbb N}$ of soft points in $G_{M}$, we have ${\mathcal{I}}({\Gamma}_{w})\widetilde{\subset} L_{w}$.
%\end{theorem}

%\begin{proof}
%Let $x_{s}^{M^{*}}\widetilde{\in} {\mathcal{I}}({\Gamma}_{w})$ and $V_{M^{'}}$ be a soft neighbourhood of $x_{s}^{M^{*}}$. Then $\{n\in \mathbb N: \eta_{n}
%\widetilde{\in} V_{M^{'}}\}\notin \mathcal{I}$. That implies that $\{n\in \mathbb N: \eta_{n}\widetilde{\in} V_{M^{'}}\}$ is an infinite
% set. So, $x_{s}^{M^{*}}$ is soft limit point of $\{\eta_{n}\}_{n\in \mathbb N}$. Hence, $x_{s}^{M}\widetilde{\in} L_{w}$. Since, this soft
% point in ${\mathcal{I}}({\Gamma}_{w})$ is arbitrary. So, ${\mathcal{I}}({\Gamma}_{w})\widetilde{\subset} L_{w}$.
%\end{proof}

%So, by previous theorem we can say that for any sequence $\{\eta_{n}\}_{n\in \mathbb N}$ of soft points in a soft topological space $(G_{M},
% \widetilde{\tau})$, ${\mathcal{I}}({\Lambda}_{w})\widetilde{\subset} {\mathcal{I}}({\Gamma}_{w})\widetilde{\subset} L_{w}$.

%theorem 5.4%
\begin{theorem}
Let $(G_{M},\widetilde{\tau})$ be a soft topological space. For any sequence $w=\{\eta_{n}\}_{n\in \mathbb N}$ of soft points in $G_{M}$, ${\mathcal{I}}({\widetilde{\Gamma}}_{w})$ is soft closed in $(G_{M}, \widetilde{\tau})$.
\end{theorem}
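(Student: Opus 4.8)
The plan is to prove that the soft complement $D = G_{M}\widetilde{-}{\mathcal{I}}({\widetilde{\Gamma}}_{w})$ is soft open; by the definition of a soft closed set this is exactly what it means for ${\mathcal{I}}({\widetilde{\Gamma}}_{w})$ to be soft closed. The guiding idea, just as in the ordinary topological case, is that failing to be a cluster point is ``locally witnessed'': if a soft point is not an $\mathcal{I}$-soft cluster point, then it has a soft open neighbourhood witnessing this, and that same neighbourhood simultaneously witnesses the failure for every soft point inside it. I will produce such a witnessing open set around each soft point of $D$ and then reassemble $D$ as a soft union of these sets.

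First I would take an arbitrary soft point $y_{s'}^{M^{**}}\widetilde{\in} D$. By definition $y_{s'}^{M^{**}}$ is not an $\mathcal{I}$-soft cluster point of $w$, so negating the defining condition there is a soft neighbourhood $U_{M'''}$ of it with $\{n\in \mathbb N: \eta_{n}\widetilde{\in} U_{M'''}\}\in \mathcal{I}$. Using the definition of a soft neighbourhood I would shrink $U_{M'''}$ to a soft open set $V\in \widetilde{\tau}$ with $y_{s'}^{M^{**}}\widetilde{\in} V \widetilde{\subset} U_{M'''}$. Since $\{n\in \mathbb N: \eta_{n}\widetilde{\in} V\}\subset \{n\in \mathbb N: \eta_{n}\widetilde{\in} U_{M'''}\}$, heredity of the ideal (property $(ii)$ in the definition of an ideal) yields $\{n\in \mathbb N: \eta_{n}\widetilde{\in} V\}\in \mathcal{I}$. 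Thus each soft point of $D$ admits a soft open neighbourhood $V$ with $\{n\in \mathbb N: \eta_{n}\widetilde{\in} V\}\in \mathcal{I}$.

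The key step is then to verify that this single soft open set $V$ lies entirely inside $D$. For any soft point $z\widetilde{\in} V$, the set $V$ is soft open and contains $z$, hence is a soft neighbourhood of $z$; since $\{n\in \mathbb N: \eta_{n}\widetilde{\in} V\}\in \mathcal{I}$, the point $z$ cannot be an $\mathcal{I}$-soft cluster point of $w$, so $z\widetilde{\in} D$. As $z$ was an arbitrary soft point of $V$, this gives $V\widetilde{\subset} D$.

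Finally I would assemble these neighbourhoods. For each soft point $y\widetilde{\in} D$ the above produces a soft open set $V_{y}$ with $y\widetilde{\in} V_{y}\widetilde{\subset} D$. Invoking the Remark that any soft set is the soft union of all soft points in it, I would write $D=\underset{y\widetilde{\in} D}{\widetilde{\cup}}\,V_{y}$, where the inclusion $\widetilde{\subset}$ holds because each $y$ lies in its own $V_{y}$, and the reverse inclusion holds because each $V_{y}\widetilde{\subset} D$. Being a soft union of soft open sets, $D$ is soft open by condition $(iii)$ in the definition of a soft topology, and therefore ${\mathcal{I}}({\widetilde{\Gamma}}_{w})$ is soft closed in $(G_{M},\widetilde{\tau})$. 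I expect the only genuine care to be required in the soft-set bookkeeping of this last paragraph—passing from the pointwise neighbourhoods $V_{y}$ to the equality $D=\underset{y\widetilde{\in} D}{\widetilde{\cup}}\,V_{y}$ through the soft-point decomposition—while the topological content itself is routine.
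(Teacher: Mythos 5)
Your proof is correct, but it runs in the dual direction to the paper's. The paper does not touch the complement at all: it takes an arbitrary $y_{s}^{M^{**}}\widetilde{\in}\operatorname{cl}({\mathcal{I}}({\widetilde{\Gamma}}_{w}))$, picks in any soft neighbourhood $U_{M^{''}}$ of $y_{s}^{M^{**}}$ a cluster point $w_{s^{'}}^{M^{***}}\widetilde{\in} U_{M^{''}}\widetilde{\cap}{\mathcal{I}}({\widetilde{\Gamma}}_{w})$, transfers $\{n\in\mathbb N:\eta_{n}\widetilde{\in} U_{M^{''}}\}\notin\mathcal{I}$ from that point back to $y_{s}^{M^{**}}$, and concludes $\operatorname{cl}({\mathcal{I}}({\widetilde{\Gamma}}_{w}))={\mathcal{I}}({\widetilde{\Gamma}}_{w})$, invoking the quoted theorem that a soft set is soft closed iff it equals its soft closure. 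You instead verify the definition of soft closed directly, showing $G_{M}\widetilde{-}{\mathcal{I}}({\widetilde{\Gamma}}_{w})$ is soft open via witnessing open sets and the soft-point decomposition remark. The closure route is shorter given the cited characterization; your route is more self-contained, needing only the definition of soft closed, heredity of the ideal, and the remark. One comparative point in your favour: your explicit shrinking of the neighbourhood $U_{M^{'''}}$ to a soft \emph{open} $V$ handles a detail the paper's proof glosses over --- there, $U_{M^{''}}$ is a soft neighbourhood of $y_{s}^{M^{**}}$, yet the proof asserts it is also a soft neighbourhood of the chosen point $w_{s^{'}}^{M^{***}}$, which is not automatic (a neighbourhood of $y_{s}^{M^{**}}$ is only guaranteed to contain an open set around $y_{s}^{M^{**}}$); the standard repair is precisely your move of passing to the open kernel before selecting the interior point. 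Also note that in your dual setting the transfer of non-membership in $\mathcal{I}$ becomes heredity ($A\subset B$, $B\in\mathcal{I}$ implies $A\in\mathcal{I}$), whereas the paper uses the contrapositive ($A\subset B$, $A\notin\mathcal{I}$ implies $B\notin\mathcal{I}$); these are the same axiom viewed from the two sides of the complement.
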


\begin{proof}
Let $y_{s}^{M^{**}}\widetilde{\in} 
~\mbox{cl}({\mathcal{I}}({\widetilde{\Gamma}}_{w}))$ and let $U_{M^{''}}$ be a soft neighbourhood of $y_{s}^{M^{**}}$. Then, $U_{M^{''}}\widetilde{\cap}
 {\mathcal{I}}({\widetilde{\Gamma}}_{w}) \neq \widetilde{\emptyset}$. Let $w_{s^{'}}^{M^{***}}\widetilde{\in}
 {\mathcal{I}}({\widetilde{\Gamma}}_{w})\widetilde{\cap} U_{M^{''}}$. Since, $w_{s^{'}}^{M^{***}}$ is an $\mathcal{I}$-soft cluster point of $\{\eta_{n}\}_{n\in \mathbb N}$ and $U_{M^{''}}$ is a soft neighbourhood of $w_{s^{'}}^{M^{***}}$, so $\{n\in \mathbb N: \eta_{n}\widetilde{\in} U_{M^{''}}\}
\notin \mathcal{I}$. So, $y_{s}^{M^{**}}\widetilde{\in} {\mathcal{I}}({\widetilde{\Gamma}}_{w})$.  ${\mathcal{I}}({\widetilde{\Gamma}}_{w})$ is soft closed in $(G_{M}, \widetilde{\tau})$.  
\end{proof}

%theorem 5.5%
%theorem 5.6%
\begin{theorem}
Let $(G_{M}, \widetilde{\tau})$ be a soft topological space and $w=\{\eta_{n}\}_{n\in \mathbb N}$ and $v=\{\gamma_{n}\}_{n\in \mathbb N}$ are sequences of soft points in $G_{M}$ such that $\{n\in \mathbb N : \eta_{n}\neq \gamma_{n}\}\in \mathcal{I}$. Then 

$(i)$ ${\mathcal{I}}({\widetilde{\Lambda}}_{w})= {\mathcal{I}}({\widetilde{\Lambda}}_{v})$

$(ii)$ ${\mathcal{I}}({\widetilde{\Gamma}}_{w})= {\mathcal{I}}({\widetilde{\Gamma}}_{v})$.

\end{theorem}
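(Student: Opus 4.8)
The plan is to exploit the symmetry of the hypothesis: the set $E := \{n \in \mathbb{N} : \eta_{n} \neq \gamma_{n}\}$ belongs to $\mathcal{I}$ and is unchanged when $w$ and $v$ are interchanged, so in each part it suffices to prove one inclusion and then swap the roles of the two sequences. Note that $\eta_{n} = \gamma_{n}$ for every $n \notin E$. The one fact I would use repeatedly is the following consequence of $\mathcal{I}$ being an ideal: for any $A \subset \mathbb{N}$ one has $A \in \mathcal{I}$ if and only if $A \setminus E \in \mathcal{I}$ (the forward direction by heredity, the reverse because $A \subset (A \setminus E) \cup E \in \mathcal{I}$). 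In particular, since $\mathcal{I}$ is admissible it contains every finite set, so any set lying outside $\mathcal{I}$ is necessarily infinite.

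For part $(ii)$, I would take $x_{s}^{M^{*}} \widetilde{\in} \mathcal{I}(\widetilde{\Gamma}_{w})$ and let $U_{M^{''}}$ be an arbitrary soft neighbourhood of $x_{s}^{M^{*}}$. Put $A = \{n \in \mathbb{N} : \eta_{n} \widetilde{\in} U_{M^{''}}\}$ and $B = \{n \in \mathbb{N} : \gamma_{n} \widetilde{\in} U_{M^{''}}\}$. Since $\eta_{n} = \gamma_{n}$ off $E$, these two sets agree outside $E$, that is $A \setminus E = B \setminus E$. The hypothesis gives $A \notin \mathcal{I}$, so by the fact above $A \setminus E = B \setminus E \notin \mathcal{I}$, whence $B \notin \mathcal{I}$. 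As $U_{M^{''}}$ was arbitrary, $x_{s}^{M^{*}}$ is an $\mathcal{I}$-soft cluster point of $v$; swapping $w$ and $v$ gives the reverse inclusion, and hence $(ii)$.

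For part $(i)$, I would take $x_{s}^{M^{*}} \widetilde{\in} \mathcal{I}(\widetilde{\Lambda}_{w})$, so there is $P = \{p_{1} < p_{2} < \cdots\} \notin \mathcal{I}$ with $\lim_{k \to \infty} \eta_{p_{k}} = x_{s}^{M^{*}}$. Set $P' = P \setminus E$ and enumerate it as $P' = \{q_{1} < q_{2} < \cdots\}$. By the fact above $P' \notin \mathcal{I}$, so $P'$ is infinite and this enumeration is genuine; moreover $\gamma_{q_{k}} = \eta_{q_{k}}$ for all $k$ because $q_{k} \notin E$. Since $\{q_{k}\}_{k}$ is a subsequence of $\{p_{k}\}_{k}$ and $\{\eta_{p_{k}}\}$ soft converges to $x_{s}^{M^{*}}$, so does $\{\eta_{q_{k}}\} = \{\gamma_{q_{k}}\}$; thus $\lim_{k \to \infty} \gamma_{q_{k}} = x_{s}^{M^{*}}$ with $P' \notin \mathcal{I}$, giving $x_{s}^{M^{*}} \widetilde{\in} \mathcal{I}(\widetilde{\Lambda}_{v})$. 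Interchanging the two sequences then yields equality in $(i)$.

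The whole argument is routine once the symmetric-difference bookkeeping is in place; the only point demanding a little care---and the nearest thing to an obstacle---is the observation that deleting an $\mathcal{I}$-set can never move a non-$\mathcal{I}$-set into $\mathcal{I}$ (so that $P \setminus E$ still witnesses an $\mathcal{I}$-soft limit point), together with invoking admissibility to guarantee that $P \setminus E$ remains infinite so that the subsequential limit in the definition of an $\mathcal{I}$-soft limit point continues to make sense.
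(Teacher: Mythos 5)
Your proof is correct and follows essentially the same route as the paper's: in part $(i)$ the paper likewise removes the exceptional set (its $Y$, your $E$) from the witnessing set $W=P$, shows $W-Y\notin\mathcal{I}$ via the union property exactly as in your key lemma, and passes to the resulting subsequence of equal terms; in part $(ii)$ the paper uses the same inclusion $\{n:\eta_{n}\widetilde{\in}R_{M''}\}\subset E\cup\{n:\gamma_{n}\widetilde{\in}R_{M''}\}$ that underlies your ``$A\in\mathcal{I}$ iff $A\setminus E\in\mathcal{I}$'' observation, and both arguments conclude by the symmetry of $E$ in $w$ and $v$. Your explicit remarks on admissibility forcing $P\setminus E$ to be infinite, and on subsequential soft convergence, are exactly the (implicit) steps the paper relies on, so there is no gap.
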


\begin{proof}
$(i)$ Let $x_{s}^{M^{*}}\widetilde{\in} {\mathcal{I}}({\widetilde{\Lambda}}_{w})$. Then there exists a set $W=\{w_{1}< w_{2}< \cdots< w_{k}< \cdots\}\subset \mathbb N$ such that $W\notin \mathcal{I}$ and $\displaystyle{\lim_{k\rightarrow \infty}} \eta_{w_{k}}= x_{s}^{M^{*}}$.

Let $Y=\{n\in \mathbb N : \eta_{n}\neq \gamma_{n}\}$. We claim that $W-Y\notin \mathcal{I}$. In contrary, suppose $W-Y\in \mathcal{I}$. Since $Y\in \mathcal{I}$, so $W= (W-Y)\cup Y\in \mathcal{I}$, a contradiction. So, $W-Y\notin \mathcal{I}$. Then $W-Y$ is an infinite set. Let $W-Y=\{t_{1}< t_{2}< \cdots< t_{l}< \cdots\}$ and $V_{M^{'}}$ be a soft neighbourhood of $x_{s}^{M^{*}}$. Since $\displaystyle{\lim_{k\rightarrow \infty}}{\eta_{w_{k}}}= x_{s}^{M^{*}}$, so there exists $k_{0}\in \mathbb N$, such that $\eta_{w_{k}}\widetilde{\in} V_{M^{'}},~\forall~k> k_{0}$. Then, their exists $k_{0}^{'}\in \mathbb N$ such that $\gamma_{t_{l}}\widetilde{\in} V_{M^{'}},~\forall~l> k_{0}^{'}$. Therefore, $\displaystyle{\lim_{l\rightarrow \infty}} \gamma_{t_{l}}= x_{s}^{M^{*}}$ and hence $x_{s}^{M^{*}}\widetilde{\in} {\mathcal{I}}({\widetilde{\Lambda}}_{v})$. So, ${\mathcal{I}}({\widetilde{\Lambda}}_{w})\widetilde{\subset} {\mathcal{I}}({\widetilde{\Lambda}}_{v})$. Similarly, we have ${\mathcal{I}}({\widetilde{\Lambda}}_{v})\widetilde{\subset} {\mathcal{I}}({\widetilde{\Lambda}}_{w})$. Thus, ${\mathcal{I}}({\widetilde{\Lambda}}_{w})= {\mathcal{I}}({\widetilde{\Lambda}}_{v})$.

$\\(ii)$ Let $x_{s}^{M^{*}}\widetilde{\in} {\mathcal{I}}({\widetilde{\Gamma}}_{w})$ and $R_{M^{''}}$ be a soft neighbourhood of $x_{s}^{M^{*}}$. Then $\{n\in \mathbb N : \eta_{n}\widetilde{\in} R_{M^{''}}\}\notin \mathcal{I}$. If possible, let $\{n\in \mathbb N : \gamma_{n}\widetilde{\in} R_{M^{''}}\}\in \mathcal{I}$.

Since $\{n\in \mathbb N : \eta_{n}\widetilde{\in} R_{M^{''}}\}\subset \{n\in \mathbb N : \eta_{n}\neq \gamma_{n}\}\cup\{n\in \mathbb N : \gamma_{n}\widetilde{\in} R_{M^{''}}\}$ and $\{n\in \mathbb N : \eta_{n}\neq \gamma_{n}\}\in \mathcal{I}$, so $\{n\in \mathbb N : \eta_{n}\neq \gamma_{n}\}\cup\{n\in \mathbb N : \gamma_{n}\widetilde{\in} R_{M^{''}}\}\in \mathcal{I}$ and so $\{n\in \mathbb N : \eta_{n}\widetilde{\in} R_{M^{''}}\}\in \mathcal{I}$, a contradiction. Therefore, $\{n\in \mathbb N : \gamma_{n}\widetilde{\in} R_{M^{''}}\}\notin \mathcal{I}$. This implies, $x_{s}^{M^{*}}\widetilde{\in} {\mathcal{I}}({\widetilde{\Gamma}}_{v})$. So, ${\mathcal{I}}({\widetilde{\Gamma}}_{w})\widetilde{\subset} {\mathcal{I}}({\widetilde{\Gamma}}_{v})$. Similarly we have ${\mathcal{I}}({\widetilde{\Gamma}}_{v})\widetilde{\subset} {\mathcal{I}}({\widetilde{\Gamma}}_{w})$. Hence, ${\mathcal{I}}({\widetilde{\Gamma}}_{w})= {\mathcal{I}}({\widetilde{\Gamma}}_{v})$.

\end{proof}

%theorem 5.7%

%$$$$$$$$$$$$$$$$$$$$$$$$$$$$$$$$$$$$$$$$$$$$$$$$$$$$$$$$$$$$$$$$$$$$$$$$$$$$$$$$$$$$$$$$$$$$$$$$$$$$$$$$$$$$$$$%

%$$$$$$$$$$$$$$$$$$$$$$$$$$$$$$$$$$$$$$$$$$$$$$$$$$$$$$$$$$$$$$$$$$$$$$$$$$$$$$$$$$$$$$$$$$$$$$$$$$$$$$$$$$$$$$$%

\bigskip
\noindent
{\bf Acknowledgment.}
The second author is grateful to the University Grants Commission, India for
financial support under UGC-JRF scheme during the preparation of this paper.

\end{document}